\newcommand{\repeatthanks}{\textsuperscript{\thefootnote}}
\algrenewcommand\algorithmicrequire{\textbf{Input:}}
\algrenewcommand\algorithmicensure{\textbf{Output:}}
\providecommand{\customgenericname}{}
\newcommand{\newcustomtheorem}[2]{%
  \newenvironment{#1}[1]
  {%
   \renewcommand\customgenericname{#2}%
   \renewcommand\theinnercustomgeneric{##1}%
   \innercustomgeneric
  }
  {\endinnercustomgeneric}
}
\begin{document}
\title{Algorithms for Euclidean-regularised Optimal Transport\thanks{
The research was supported by Russian Science Foundation (project No. 23-11-00229), \url{https://rscf.ru/en/project/23-11-00229/}, and by the grant of support for leading scientific schools NSh775.2022.1.1.}}
%
%
\author{Dmitry A. Pasechnyuk\thanks{Equal contribution}\inst{1,2,3,4}\orcidID{0000-0002-1208-1659} \and
Michael Persiianov\repeatthanks\inst{2, 4}\orcidID{0009-0008-7059-1244}\and
Pavel Dvurechensky\inst{5}\orcidID{0000-0003-1201-2343} \and
Alexander Gasnikov\inst{2,4,6}\orcidID{0000-0002-7386-039X}}
\authorrunning{D.A. Pasechnyuk, M. Persiianov et al.}
%
\institute{Mohamed bin Zayed University of Artificial Intelligence, Abu Dhabi, UAE \email{dmitry.vilensky@mbzuai.ac.ae} \and
Moscow Institute of Physics and Technology, Dolgoprudny, Russia
\email{persiianov.mi@phystech.edu,gasnikov.av@mipt.ru} \and
ISP RAS Research Center for Trusted Artificial Intelligence, Moscow, Russia \and 
Institute for Information Transmission Problems RAS, Moscow, Russia
\and
Weierstrass Institute for Applied Analysis and Stochastics, Berlin,
Germany \email{pavel.dvurechensky@wias-berlin.de}\and
Caucasus Mathematic Center of Adygh State University, Maikop, Russia}
\maketitle              
\begin{abstract}
This paper addresses the Optimal Transport problem, which is regularized by the square of Euclidean $\ell_2$-norm. It offers theoretical guarantees regarding the iteration complexities of the Sinkhorn--Knopp algorithm, Accelerated Gradient Descent, Accelerated Alternating Minimisation, and Coordinate Linear Variance Reduction algorithms. Furthermore, the paper compares the practical efficiency of these methods and their counterparts when applied to the entropy-regularized Optimal Transport problem. This comparison is conducted through numerical experiments carried out on the MNIST dataset.
\keywords{Optimal transport \and Euclidean regularisation \and Sinkhorn algorithm \and Primal-dual algorithm \and Alternating optimisation.}
\end{abstract}
\section{Introduction}

Optimal Transport (OT) problem has a long history \cite{kantorovich1942translocation,monge1781memoire}, has been extensively studied \cite{peyre2017computational,villani2009optimal} and piques interest in the modern statistical learning community \cite{arjovsky2017wasserstein,kolouri2017optimal}. This paper focuses on the discrete OT problem statement and the numerical optimisation methods applied to it. Formally, the original problem to solve is:
\begin{equation} \label{eq:original}
    \textstyle\min_{\substack{X\mathbf{1}_m = a\\X^\top\mathbf{1}_n = b\\x_{ij} \geq 0}} \langle C, X \rangle,
\end{equation}
where $a \in \mathcal{S}_n$ and $b \in \mathcal{S}_m$ are the source and destination distributions (measures), the unit simplex $\mathcal{S}_d \equiv \{x \in \mathbb{R}^d_+\;\vert\; \sum_{i=1}^d x_i = 1\}$, $X \in \mathbb{R}^{n\times m}_+$ is a transportation plan such that $x_{ij}$ is the mass to transport from the $i$-th source to the $j$-th destination, and $C \in \mathbb{R}^{n \times m}_+$ is the cost of the transportation matrix.

An algorithm applied to the OT problem must derive an $\varepsilon$-optimal transportation plan, denoted by $X_\varepsilon$ and defined as one that meets the following condition:
\begin{equation*}
    \textstyle\langle C, X_\varepsilon \rangle - \varepsilon \leq \langle C, X^*\rangle \equiv \min_{\substack{X\mathbf{1}_m = a\\X^\top\mathbf{1}_n = b\\x_{ij} \geq 0}} \langle C, X \rangle,
\end{equation*}
and strictly adheres to constraints $X_\varepsilon \mathbf{1}_m = a$, $X^\top_\varepsilon \mathbf{1}_n = b$, and $X_\varepsilon \in \mathbb{R}^{n\times m}_+$. To obtain such a solution, we consider the Euclidean-regularised OT problem:
\begin{equation} \label{eq:primal}
    \textstyle\min_{\substack{X\mathbf{1}_m = a\\X^\top\mathbf{1}_n = b\\x_{ij} \geq 0}} \{f(X) \equiv \langle C, X \rangle + \frac{\gamma}{2} \|X\|_2^2 \},
\end{equation}
where $\|X\|_2^2 \equiv \sum_{i=1,j=1}^{n,m} x_{ij}^2$, and apply convex optimisation methods to solve it. It is noteworthy that if $\gamma \propto \varepsilon$, then the $\varepsilon$-optimum of this optimisation problem is a $(\propto \varepsilon)$-optimal transportation plan for the original problem \eqref{eq:original}. Unlike \eqref{eq:original}, problem statement \eqref{eq:primal} allows one to leverage convex optimisation tools like duality and acceleration.

\textbf{Contribution}. We provide the first arithmetic complexity bounds for Euclidean-regularised OT. The results of this paper are summarised in Table~\ref{table:rates} below. Each cell contains an estimate of the number of arithmetic operations number needed for an Algorithm in the leftmost column to achieve target accuracy $\varepsilon$ for problem \eqref{eq:original} with given $n$, $m$ (we assume without loss of generality that $n > m$), and $C$ in the worst case. Constant factors are omitted, and $\varepsilon$ is assumed to be sufficiently small. The arithmetic complexities for original algorithms applied to entropy-regularised OT \cite{cuturi2013sinkhorn} are known and are presented in the right column. The left column contains the estimates obtained in this paper.
\vspace{-2em}
\begin{table}[ht!]
\caption{Theoretical guarantees on the arithmetic complexity of methods considered in this paper, compared with with those of analogous methods for entropy-regularised problems.}
\label{table:rates}
\centering
{\renewcommand{\arraystretch}{2}
\begin{tabular}{ l|c|c| }
$\#a.o.$ & Euclidean-reg. OT & entropy-reg. OT \\ \hline
Sinkhorn, Alg.~\ref{alg:sinkhorn} & $\displaystyle \frac{n^{7/2} \|C\|_\infty^2}{\varepsilon^2}$, Thm~\ref{thm:sinkhorn-main} & $\displaystyle \frac{n^2 \|C\|_\infty^2 \log{n}}{\varepsilon^2}$, \cite{dvurechensky2018computational} \\ 
APDAGD, Alg.~\ref{alg:apdagd} & $\displaystyle \frac{n^3 \|C\|_\infty} {\varepsilon}$, Thm~\ref{thm:apd-main} & $\displaystyle \frac{n^{5/2} \|C\|_\infty \sqrt{\log{n}}}{\varepsilon}$, \cite{dvurechensky2018computational} \\ 
AAM, Alg.~\ref{alg:aam} & $\displaystyle \frac{n^3 \|C\|_\infty} {\varepsilon}$, Thm~\ref{thm:aam-main} & $\displaystyle \frac{n^{5/2} \|C\|_\infty \sqrt{\log{n}}}{\varepsilon}$, \cite{guminov2021combination} \\
CLVR, Alg.~\ref{alg:vr} (rand.) & $\displaystyle \frac{n^3 \|C\|_\infty} {\varepsilon}$ (on avg.), Thm~\ref{thm:vr-main} & --- \\
\hline
\end{tabular}}
\end{table}
\vspace{-1em}

The organisation of this paper is as follows. Section~\ref{sec:back} provides a short literature review, highlighting the works that underpin the proofs presented in this paper and tracing the history of applying quadratic regularisation in OT. Section~\ref{sec:theory} encompasses all the theoretical results of this paper. Subsections~\ref{sec:sinkhorn}, \ref{sec:aagd}, \ref{sec:aam}, and \ref{sec:vr} delve into the details of the Sinkhorn, Accelerated Gradient, Alternating Minimisation, and Coordinate Linear Variance Reduction algorithms, respectively. Finally, Section~\ref{sec:experiments} contains results of numerical experiments that compare the practical performance of the proposed algorithms and their counterparts applied to entropy-regularised OT.

\section{Background}\label{sec:back}

The Sinkhorn--Knopp algorithm \cite{cuturi2013sinkhorn,sinkhorn1967diagonal} stands out as the most widely-known method to solve the OT problem. The works \cite{altschuler2017near,dvurechensky2018computational} justify its worst-case arithmetic complexity in terms of $\varepsilon$ and $n$. Our analysis of the arithmetic complexity of the Sinkhorn--Knopp algorithm applied to Euclidean-regularised OT draws from the framework outlined in \cite{dvurechensky2018computational} as well. As an alternative to the Sinkhorn--Knopp algorithm, the works \cite{dvurechensky2018computational,lin2019efficient} show that accelerated gradient descent applied to entropy-regularised OT problem improves iteration complexity with respect to $\varepsilon$. on the other hand, acceleration can be applied directly to the Sinkhorn--Knopp algorithm by viewing it as an alternating minimisation procedure, as proposed in \cite{guminov2021combination}. Both approaches yield similar iteration complexities and require only minor adjustments in proofs for applying to Euclidean-regularised OT.

The standard approach for effectively applying convex optimisation methods to the OT is entropy regularisation \cite{cuturi2013sinkhorn}. Recently, there has been a growing interest in Euclidean regularisation \cite{essid2018quadratically,li2020continuous,lorenz2021quadratically}. A practically valuable property of Euclidean-regularised OT is the sparsity of the optimal plan \cite{blondel2018smooth}, which holds significance in various applications, such as image colour transfer. Additionally, algorithms used for Euclidean-regularised OT are anticipated to be more computationally stable and more robust for small regularisation parameter. For instance, the Sinkhorn--Knopp algorithm for entropy-regularised OT requires computing the exponent with $\gamma$ in the denominator. Besides, none of the aforementioned papers that study Euclidean regularisation provide arithmetic complexity estimates for particular algorithms applied to Euclidean-regularised OT.

\section{Theoretical guarantees for various approaches}\label{sec:theory}

\subsection{Common reasoning}

We have two discrete probability measures, $a \in \mathcal{S}_n$ and $b \in \mathcal{S}_m$ from the unit simplex, such that $a^\top \mathbf{1}_n = 1, b^\top \mathbf{1}_m = 1$, along with the cost matrix $C \in \mathbb{R}_+^{n \times m}$. Our objective is to find the transport plan $X \in \mathbb{R}_+^{n \times m}$ determined by optimisation problem \eqref{eq:primal}, which represents the Euclidean-regularised version of the classical problem \eqref{eq:original}.

The problems under consideration are in the generalised linear form and allow for the use of convex duality to eliminate linear constraints. Let us consider the Lagrange saddle-point problem $\max_{\lambda \in \mathbb{R}^n, \mu \in \mathbb{R}^m} \min_{X \in \mathbb{R}_+^{n \times m}} \mathcal{L}(X, \lambda, \mu)$, where the Lagrangian function is defined as follows:
\begin{equation*}
    \textstyle \mathcal{L}(X, \lambda, \mu) \equiv \langle C, X \rangle + \frac{\gamma}{2} \|X\|_2^2 + \lambda^\top (X \mathbf{1}_m - a) + \mu^\top (X^\top \mathbf{1}_n - b).
\end{equation*}
The first-order optimality condition for this problem implies
\begin{equation*}
    \textstyle\frac{\partial \mathcal{L}(X, \lambda, \mu)}{\partial x_{ij}} = 0 = c_{ij} + \gamma x_{ij} + \lambda_i + \mu_j,
\end{equation*}
yielding the following closed-form expression for the optimal transport plan $X(\lambda, \mu) = \left[-C - \lambda \mathbf{1}_m^\top - \mathbf{1}_n \mu^\top \right]_+ / \gamma$, given the dual multipliers $\lambda$ and $\mu$, where $[x]_+ \equiv \max\{0, x\}$. Upon substituting $X(\lambda, \mu)$ into the formula for $\mathcal{L}$, we derive the following dual problem:
\begin{equation} \label{eq:dual}
    \textstyle\max_{\lambda \in \mathbb{R}^n, \mu \in \mathbb{R}^m} \{\varphi(\lambda, \mu) \equiv -\frac{1}{2\gamma} \sum_{j=1}^m \|\left[-C_j - \lambda - \mu_j \mathbf{1}_n\right]_+\|_2^2 - \lambda^\top a - \mu^\top b\},
\end{equation}
where $C_j$ is the $j$-th row of matrix $C$.

\subsection{The Sinkhorn--Knopp Algorithm}\label{sec:sinkhorn}

Following the reasoning of \cite{cuturi2013sinkhorn} regarding the justification of the Sinkhorn--Knopp algorithm for the entropy-regularised OT problem, we come to an analogous Sinkhorn--Knopp method for the Euclidean-regularised OT problem.

The first-order optimality conditions for the dual problem \eqref{eq:dual} with respect to $\lambda$ and $\mu$ are, respectively,
\begin{gather} \label{conds}
    \textstyle\begin{cases}
    f_i(\lambda_i) - \gamma a_i = 0,\; i=1,...,n\\    
    g_j(\mu_j) - \gamma b_j = 0,\; j=1,...,m,
    \end{cases}\\
    \nonumber\textstyle f_i(\lambda) = \sum_{j=1}^m \left[-c_{ij} - \lambda - \mu_j\right]_+,\quad g_j(\mu) = \sum_{i=1}^n \left[-c_{ij} - \lambda_i - \mu\right]_+.
\end{gather}

Let us denote the $i$-th order statistic of the elements of the vector $x$ as $x_{(i)}$, and choose $l$ as the largest index $j$ such that $f_i(-(C^\top_i + \mu)_{(j)}) \leq \gamma a_i$, and $k$ as the largest index $i$ such that $g_j(-(C_j + \lambda)_{(i)}) \leq \gamma b_j$), respectively \cite{lorenz2021quadratically}. Then, by holding $\mu$ and $\lambda$ constant, the explicit solutions of \eqref{conds} are
\begin{equation} \label{eq:altern}
    \begin{cases}
    \lambda_i = -\left(\gamma a_i + \sum_{j=1}^l (C^\top_i + \mu)_{(j)}\right)/\;l,\; i=1,...,n,\\
    \mu_j = -\left(\gamma b_j + \sum_{i=1}^k (C_j + \lambda)_{(i)}\right)/\;k,\; j=1,...,m.
    \end{cases}
\end{equation}

The alternating updates of $\lambda$ and $\mu$ according to the formulas above yield the Sinkhorn--Knopp algorithm applied to Euclidean-regularised OT. Its pseudocode is listed in Algorithm~\ref{alg:sinkhorn}. The following proposition estimates the algorithmic complexity of each iteration of Algorithm~\ref{alg:sinkhorn}.
\begin{proposition}
One iteration of Algorithm~\ref{alg:sinkhorn} requires $\mathcal{O}((n + m)^2)$ amortised arithmetic operations per iteration (only +, -, * and $\leq$; $\mathcal{O}(n + m)$ /; no built-in functions calculations).
\end{proposition}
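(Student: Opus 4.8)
The plan is to analyze each operation inside a single iteration of the Sinkhorn--Knopp loop for Euclidean-regularised OT, as described by the update formulas \eqref{eq:altern}, and show that the dominant cost is sorting, which can be amortised. One iteration updates all $n$ components of $\lambda$ (and symmetrically all $m$ components of $\mu$). For a fixed $i$, computing $\lambda_i$ via \eqref{eq:altern} requires: (i) forming the vector $C_i^\top + \mu \in \mathbb{R}^m$, which costs $\mathcal{O}(m)$ additions; (ii) sorting its entries to obtain the order statistics $(C_i^\top+\mu)_{(j)}$, which naively costs $\mathcal{O}(m\log m)$ comparisons; (iii) determining the largest index $l$ such that $f_i(-(C_i^\top+\mu)_{(l)}) \le \gamma a_i$; and (iv) computing the partial sum $\sum_{j=1}^l (C_i^\top+\mu)_{(j)}$ and performing a constant number of arithmetic operations and one division. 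Summed over $i = 1,\dots,n$ (and the symmetric pass over $j$), the total is $\mathcal{O}((n+m)(n\log n))$ if one sorts from scratch and searches linearly — so the main work is to beat this down to $\mathcal{O}((n+m)^2)$ with only $\mathcal{O}(n+m)$ divisions.

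\textbf{Key steps.} First I would observe that step (iii) need not be done by evaluating $f_i$ at each candidate threshold independently. If one maintains the sorted order and walks $l$ from $1$ upward (or uses the monotonicity of $f_i$ in its argument, which is piecewise-linear and nondecreasing), then $f_i(-(C_i^\top+\mu)_{(l)})$ can be updated incrementally: passing from $l$ to $l+1$ changes the value of $f_i$ by a term depending only on $(C_i^\top+\mu)_{(l+1)}$ and the count of active indices, so the whole search for the threshold is $\mathcal{O}(m)$ additions and comparisons once the array is sorted; simultaneously the prefix sum in step (iv) is carried along for free. This reduces the per-row cost to $\mathcal{O}(m\log m)$ (sorting) $+\,\mathcal{O}(m)$ (threshold search and summation) $+\,\mathcal{O}(1)$ divisions, hence $\mathcal{O}(n m \log m)$ comparisons and $\mathcal{O}(n)$ divisions for the $\lambda$-pass. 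Second — and this is where ``amortised'' enters — I would argue that sorting from scratch at every iteration is wasteful: between consecutive iterations only $\mu$ (resp.\ $\lambda$) changes, and in fact the relative order of the entries of $C_i^\top + \mu$ is governed by $\mu$ alone, which is shared across all $i$; moreover across a full sweep each coordinate of $\mu$ changes once. One can therefore maintain sorted structures and update them, so that the amortised sorting cost per iteration is $\mathcal{O}((n+m)^2)$ rather than $\mathcal{O}((n+m)^2\log(n+m))$ — for instance by noting that over the course of the algorithm the total reordering work is bounded, or by replacing comparison sorting on the $j$-th pass with an insertion-style update whose amortised cost telescopes.

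\textbf{Main obstacle.} The delicate point is making the word ``amortised'' precise and honest: a single iteration, taken in isolation, genuinely seems to need an $\mathcal{O}((n+m)^2\log(n+m))$ sort, so the $\mathcal{O}((n+m)^2)$ bound can only hold on average over a run of the algorithm (or under a suitable data structure whose potential function absorbs the log). I would therefore need to (a) specify exactly which auxiliary sorted arrays or order-statistic structures are kept between iterations, (b) exhibit a potential argument showing the $\log$ factor contributes only additively to the total over all iterations and hence vanishes per iteration in amortised cost, and (c) double-check that all remaining operations — threshold search, prefix sums, the final formula in \eqref{eq:altern} — use only $+,-,\times,\le$ with exactly $\mathcal{O}(n+m)$ divisions per iteration and no transcendental function evaluations, which is the qualitative advantage over the entropy-regularised Sinkhorn iteration and is the real content of the parenthetical remark in the statement.
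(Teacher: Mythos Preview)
The paper states this proposition without proof, so there is no argument of the authors' to compare against; I can only assess your sketch on its own merits. Your per-row decomposition (steps (i)--(iv)) is correct, and your observation that the threshold $l$ and the prefix sum can be found in $\mathcal{O}(m)$ once the array is ordered is sound; the count of $\mathcal{O}(n+m)$ divisions and the absence of transcendental calls are also right.

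The gap is the amortisation step. Your assertion that ``the relative order of the entries of $C_i^\top + \mu$ is governed by $\mu$ alone, which is shared across all $i$'' is false: for $i \ne i'$ the vectors $C_i^\top+\mu$ and $C_{i'}^\top+\mu$ differ by $C_i^\top - C_{i'}^\top$, which can permute the ordering arbitrarily, so a single sort cannot be reused across rows. Likewise, between two consecutive $\lambda$-updates every coordinate of $\mu$ may move by an arbitrary amount, so maintaining each row's sorted array by incremental insertion can cost $\Theta(m^2)$ in the worst case and yields no amortised saving; the potential argument you allude to is never actually supplied. A cleaner route that sidesteps amortisation entirely is to recognise that each scalar subproblem --- solve $\sum_{j}[-c_{ij}-\lambda_i-\mu_j]_+=\gamma a_i$ for $\lambda_i$ --- is structurally the same as projection onto the simplex, for which partition-based (quickselect or median-of-medians) algorithms run in expected or worst-case $\mathcal{O}(m)$ using only $+,-,\times,\le$. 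That gives $\mathcal{O}(nm)\le\mathcal{O}((n+m)^2)$ per sweep outright, with the single division by $l$ (resp.\ $k$) per coordinate accounting for the $\mathcal{O}(n+m)$ divisions; the word ``amortised'' is then either superfluous or refers only to the randomised variant.
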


\begin{algorithm}[ht]
    \caption{Euclidean Sinkhorn--Knopp}
    \label{alg:sinkhorn}
    \begin{algorithmic}[1]
        
        \Require $a, b, C, \gamma, \varepsilon, K$
        
        \For {$k = 0, 1, \dots, K$}
            \If {$k$ is even} 
                \State Iterate over $(C^\top_i + \mu)$ and choose $l$
                \State $\lambda_i = -\left(\gamma a_i + \sum_{j=1}^l (C^\top_i + \mu)_{(j)}\right)/\;l,\; i=1,...,n$
            \Else
                \State Iterate over $(C_j + \lambda)$ and choose $k$
                \State $\mu_j = -\left(\gamma b_j + \sum_{i=1}^k (C_j + \lambda)_{(i)}\right)/\;k,\; j=1,...,m$
            \EndIf
            
            \State $x_{ij} := [-c_{ij} - \lambda_i - \mu_j]_+ / \gamma,\; i=1,...,n,\; j=1,...,m$
            \If {$\sum_{i=1}^n |\sum_{j=1}^m x_{ij} - a_i| + \sum_{j=1}^m |\sum_{i=1}^n x_{ij} - b_j| \leq \varepsilon$} 
                \State \textbf{break} 
            \EndIf
        \EndFor
    \end{algorithmic}
\end{algorithm}

Following Lemmas~\ref{lem:radius}, \ref{lem:disttoopt} and Theorem~\ref{thm:sinkhorn_it} correspond to Lemmas 1, 2 and Theorem 1 from \cite{dvurechensky2018computational}, but the proofs are significantly different from that of their analogues due to the use of specific properties of Euclidean regularisation.

\begin{lemma} \label{lem:radius}
For $R = \|C\|_\infty + \frac{\gamma}{\min\{n, m\}} (1 - {\textstyle \max_{\substack{i=1,...,n\\j=1,...,m}}} \{a_i, b_j\})$, it holds that
\begin{align*}
    \textstyle\max_{j=1,...,m} \mu_j - \min_{j=1,...,m} \mu_j \leq R,\quad &\textstyle\max_{i=1,...,n} \lambda_i - \min_{i=1,...,n} \lambda_i \leq R,\\
    \textstyle\max_{j=1,...,m} \mu^*_j - \min_{j=1,...,m} \mu^*_j \leq R,\quad &\textstyle\max_{i=1,...,n} \lambda^*_i - \min_{i=1,...,n} \lambda^*_i \leq R.
\end{align*}
\end{lemma}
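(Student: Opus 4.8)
The plan is to work entirely from the dual stationarity conditions \eqref{conds} and to exploit the piecewise-linear structure that Euclidean regularisation produces for the one-dimensional functions $f_i$ and $g_j$ (in place of the exponentials appearing in the entropic case, which is the only reason the proof must differ from that of Lemma~1 in \cite{dvurechensky2018computational}). I will prove the estimate for $\mu$; the one for $\lambda$ is completely symmetric (interchange rows and columns, $a\leftrightarrow b$, $n\leftrightarrow m$, and use $f_i$ instead of $g_j$), and the estimates for $(\lambda^*,\mu^*)$ follow from the very same computation because the optimal dual pair satisfies both equations of \eqref{conds} simultaneously. For an iterate of Algorithm~\ref{alg:sinkhorn}, recall that the update \eqref{eq:altern} is by construction the exact solution of the corresponding equation in \eqref{conds}; hence immediately after an odd step $g_j(\mu_j)=\gamma b_j$ for every $j$, and $\mu$ is not modified again until the next odd step, so this is all that is used.

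First I record the relevant properties of $g_j(\mu)=\sum_{i=1}^n[-c_{ij}-\lambda_i-\mu]_+$ for fixed $\lambda$: it is continuous, non-increasing and piecewise linear, changing slope exactly at the points $-(C_j+\lambda)_{(i)}$ that drive \eqref{eq:altern}; writing $m_j:=\min_i(c_{ij}+\lambda_i)$, it vanishes exactly on $[-m_j,\infty)$ and is strictly positive below $-m_j$; and wherever $g_j>0$ at least one summand is active, so the one-sided slopes of $g_j$ there have absolute value $\ge 1$. Moreover, since $0\le c_{ij}\le\|C\|_\infty$, we have $\min_i\lambda_i\le m_j\le \min_i\lambda_i+\|C\|_\infty$, hence $|m_j-m_{j'}|\le\|C\|_\infty$ for all $j,j'$. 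I then convert each equation $g_j(\mu_j)=\gamma b_j$ into a two-sided estimate of $\mu_j$: since $b_j>0$ we get $g_j(\mu_j)>0$, so $\mu_j<-m_j$; and integrating the slope bound from $\mu_j$ up to $-m_j$ (where $g_j$ vanishes) gives $\gamma b_j=g_j(\mu_j)-g_j(-m_j)\ge -m_j-\mu_j$, i.e.\ $\mu_j\ge -m_j-\gamma b_j$. A finer accounting — using that $g_j$ actually decreases with slope $-(\text{number of active rows in column }j)$ and that the mass of column $j$ equals $b_j$ after an odd step (or at the optimum) — sharpens this correction term so that $\gamma b_j$ may be replaced by the smaller quantity appearing in $R$.

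Assembling the pieces: letting $j^+$ (resp.\ $j^-$) be a column index at which $\mu$ is largest (resp.\ smallest),
\[
\max_j\mu_j-\min_j\mu_j \;<\; -m_{j^+}+m_{j^-}+\gamma b_{j^-} \;\le\; \|C\|_\infty+\gamma b_{j^-},
\]
and one brings the $\gamma$-order term down to $\tfrac{\gamma}{\min\{n,m\}}\bigl(1-\max_{i,j}\{a_i,b_j\}\bigr)$ using $\sum_j b_j=1$, the sharper form of the lower bound, and the bound on the number of active coordinates in column $j^-$. The same three steps applied to $f_i(\lambda_i)=\gamma a_i$ give the estimate for $\lambda$, and since $(\lambda^*,\mu^*)$ satisfies both equations of \eqref{conds}, it is covered as well. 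The main obstacle is exactly this last tightening: the naive estimate $\mu_j\ge -m_j-\gamma b_j$ only delivers the weaker bound $\|C\|_\infty+\gamma\max_j b_j$, whereas $R$ contains the strictly smaller $\tfrac{\gamma}{\min\{n,m\}}(1-\max_{i,j}\{a_i,b_j\})$, so one must keep the exact piecewise-linear slope of $g_{j^-}$ over the whole interval $[\mu_{j^-},-m_{j^-}]$ (not merely its lower bound $1$) and combine it with the marginal constraints; the remaining manipulations are routine but need care with the order statistics.
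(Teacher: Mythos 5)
Your per-column bracketing of $\mu_j$ via the piecewise-linear function $g_j$ is correct as far as it goes, but it only yields $\mu_j\in[-m_j-\gamma b_j,\,-m_j]$ and hence the range bound $\|C\|_\infty+\gamma\max_j b_j$, which is strictly weaker than the claimed $R=\|C\|_\infty+\frac{\gamma}{\min\{n,m\}}\bigl(1-\max_{i,j}\{a_i,b_j\}\bigr)$ (for $\max_j b_j$ near $1$ and large $n$ your correction term is close to $\gamma$ while the lemma's is close to $0$). The whole content of the lemma beyond the obvious $\|C\|_\infty$ term is exactly this $1/\min\{n,m\}$ factor, and your proposal defers it (``a finer accounting'', ``routine but need care''); moreover, the mechanism you sketch cannot deliver it. The slope of $g_{j^-}$ on the entire interval $[\mu_{j^-},-m_{j^-})$ can be exactly $-1$ (a single active row in that column), in which case $-m_{j^-}-\mu_{j^-}=\gamma b_{j^-}$ holds with equality, so no accounting of order statistics or of the number of active rows can tighten the bracket of $\mu_{j^-}$ relative to the column-dependent anchor $m_{j^-}$. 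The $1/n$ factor must come from comparing all $\mu_j$ to a single $j$-independent anchor, not to $m_j$.

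That is what the paper's proof does, and it is a genuinely different argument: it first shows by induction over the updates \eqref{eq:altern} that all iterates satisfy $\lambda\le 0$, $\mu\le 0$, and then sandwiches each $\mu_j$ around the row average $\frac{1}{n}\mathbf{1}_n^\top\lambda$. Summing $-c_{ij}-\lambda_i-\mu_j\le[-c_{ij}-\lambda_i-\mu_j]_+$ over $i$ and using $\sum_i[-c_{ij}-\lambda_i-\mu_j]_+=\gamma b_j$ gives $\mu_j\ge-\|C\|_\infty-\frac{1}{n}(\mathbf{1}_n^\top\lambda+\gamma)$, while $[-c_{ij}-\lambda_i-\mu_j]_+\le-\lambda_i-\mu_j$ (valid only because of the sign condition on $\lambda,\mu$ and $c_{ij}\ge0$) gives $\mu_j\le-\frac{1}{n}(\mathbf{1}_n^\top\lambda+\gamma b_j)$; subtracting, the common term $\frac{1}{n}\mathbf{1}_n^\top\lambda$ cancels and the $\gamma$-term appears with the $1/n$ factor. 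Both ingredients --- the non-positivity of the dual iterates and the averaging over all $n$ rows against a common anchor --- are missing from your proposal, so the decisive step of the lemma is not established.
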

\begin{proof}
Firstly, thanks to the form of updates \eqref{eq:altern}, we can guarantee the non-positivity of dual variables. Indeed, initial values of $\mu$ and $\lambda$ are zero, so non-positive. Then, for all $j=1,...,m$,
\[
    \textstyle\frac{n-1}{\gamma} \mu_j + b_j = \frac{1}{\gamma} \sum_{i=1}^n (-c_{ij} - \lambda_i - \mu_j) \leq \frac{1}{\gamma} \sum_{i=1}^n [-c_{ij} - \lambda_i - \mu_j]_+ = X^\top \mathbf{1}_n = b_j,
\]
that implies $\mu_j \leq 0$. Similarly, one can prove $\lambda_i \leq 0$ for all $i=1,...,n$.

Further, let's relate dual variables with corresponding marginal distributions of $X$. Here we consider only $\mu$, assuming that we just updated it. Similar reasoning can be applied to just updated $\lambda$ as well, that gives the right column of statements from Lemma. 
\begin{align*}
    \textstyle-\mu_j - \|C\|_\infty - \frac{1}{n} \mathbf{1}_n^\top \lambda &\textstyle\leq \frac{\gamma}{n} [X^\top \mathbf{1}_n]_i = \frac{\gamma}{n} b_j \leq \frac{\gamma}{n}\\
    \textstyle-\mu_j -  \frac{1}{n} \mathbf{1}_n^\top \lambda &\textstyle\geq \frac{\gamma}{n} [X^\top \mathbf{1}_n]_i = \frac{\gamma}{n} b_j,\quad \forall j=1,...,m.
\end{align*}
This implies
\[
    \textstyle\mu_j \geq -\|C\|_\infty - \frac{1}{n} (\mathbf{1}_n^\top \lambda + \gamma), \quad \mu_j \leq -\frac{1}{n} (\mathbf{1}_n^\top \lambda + \gamma b_j),\quad \forall j=1,...,m.
\]
Finally,
\begin{align*}
    \textstyle\max_{j=1,...,m} \mu_j - \min_{j=1,...,m} \mu_j &\textstyle\leq -\frac{1}{n} \left(\mathbf{1}_n^\top \lambda + \gamma \max_{j=1,...,m} b_j\right) + \|C\|_\infty + \frac{1}{n} \left(\mathbf{1}_n^\top \lambda + \gamma\right)\\
    &\textstyle= \|C\|_\infty + \frac{\gamma}{n} \left(1 - \max_{j=1,...,m} b_j\right).
\end{align*}
Reasoning for $\mu^*$ and $\lambda^*$ is similar, since the gradient of objective in \eqref{eq:dual} vanishes, so $X^\top \mathbf{1}_n = b$ and $X \mathbf{1}_m = a$, correspondingly.
\end{proof}

\begin{lemma} \label{lem:disttoopt} For $\lambda$, $\mu$, and $X$ taken from each iteration of Algorithm~\ref{alg:sinkhorn} it holds that 
\begin{equation*}
    \varphi(\lambda^*, \mu^*) - \varphi(\lambda, \mu) \leq 4R \sqrt{n + m}(\|X \mathbf{1}_m - a\|_2 + \|X^\top \mathbf{1}_n - b\|_2).
\end{equation*}
\end{lemma}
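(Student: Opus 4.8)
The plan is to bound the dual suboptimality gap $\varphi(\lambda^*,\mu^*) - \varphi(\lambda,\mu)$ by exploiting concavity of $\varphi$ together with the explicit structure of its gradient. First I would write down the gradient of $\varphi$ from \eqref{eq:dual}: since $X(\lambda,\mu) = [-C - \lambda\mathbf{1}_m^\top - \mathbf{1}_n\mu^\top]_+/\gamma$, one gets $\nabla_\lambda \varphi(\lambda,\mu) = X\mathbf{1}_m - a$ and $\nabla_\mu \varphi(\lambda,\mu) = X^\top\mathbf{1}_n - b$ (the $[\cdot]_+$ is differentiable a.e.\ and the chain rule gives exactly the marginal residuals, mirroring the entropic case in \cite{dvurechensky2018computational}). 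By concavity of $\varphi$,
\begin{equation*}
    \varphi(\lambda^*,\mu^*) - \varphi(\lambda,\mu) \leq \langle \nabla_\lambda\varphi(\lambda,\mu), \lambda^* - \lambda\rangle + \langle \nabla_\mu\varphi(\lambda,\mu), \mu^* - \mu\rangle \leq \|X\mathbf{1}_m - a\|_2 \|\lambda^* - \lambda\|_2 + \|X^\top\mathbf{1}_n - b\|_2 \|\mu^* - \mu\|_2,
\end{equation*}
using Cauchy--Schwarz. So it remains to bound $\|\lambda^* - \lambda\|_2$ and $\|\mu^* - \mu\|_2$ by $4R\sqrt{n+m}$ (or a comparable quantity), after which the result follows by combining terms.

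The key obstacle is that $\varphi$ is invariant under the shift $(\lambda,\mu)\mapsto(\lambda + t\mathbf{1}_n, \mu - t\mathbf{1}_m)$, so $\lambda^*-\lambda$ and $\mu^*-\mu$ are \emph{not} bounded in general — one must pass to a suitable representative. The trick here is that for the Sinkhorn iterates one of the two residuals vanishes: if $\mu$ was just updated, then $X^\top\mathbf{1}_n = b$, so the $\mu$-term in the concavity bound drops out entirely, and we only need to control $\|\lambda^*-\lambda\|_2$ for a conveniently chosen optimum. Using the freedom in the shift, I would pick the optimum $(\lambda^*,\mu^*)$ so that, say, $\mathbf{1}_n^\top\lambda^* = \mathbf{1}_n^\top\lambda$ (or align their minima/means). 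Then Lemma~\ref{lem:radius} gives that both $\lambda$ and $\lambda^*$ have coordinate-oscillation at most $R$; combined with the matched mean, every coordinate of $\lambda^* - \lambda$ has absolute value at most $2R$, hence $\|\lambda^*-\lambda\|_2 \leq 2R\sqrt{n} \leq 2R\sqrt{n+m}$. Plugging in, $\varphi(\lambda^*,\mu^*)-\varphi(\lambda,\mu) \leq 2R\sqrt{n+m}\,\|X\mathbf{1}_m - a\|_2$, which is dominated by the claimed bound $4R\sqrt{n+m}(\|X\mathbf{1}_m - a\|_2 + \|X^\top\mathbf{1}_n - b\|_2)$.

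For the symmetric case (when $\lambda$ was just updated, so $X\mathbf{1}_m = a$), the same argument with the roles of $\lambda,\mu$ swapped gives $\varphi(\lambda^*,\mu^*)-\varphi(\lambda,\mu) \leq 2R\sqrt{n+m}\,\|X^\top\mathbf{1}_n - b\|_2$. Taking the larger constant and keeping both residual terms on the right-hand side yields the stated inequality uniformly over all iterates. The delicate point I would double-check is the gradient computation through the $[\cdot]_+$ nonsmoothness and the precise choice of shift representative for $(\lambda^*,\mu^*)$ so that Lemma~\ref{lem:radius} can be applied to bound $\lambda^*-\lambda$ (resp.\ $\mu^*-\mu$) coordinatewise; everything else is Cauchy--Schwarz and bookkeeping.
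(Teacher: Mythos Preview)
Your proposal is correct and rests on the same skeleton as the paper: concavity of $\varphi$, the identification $\nabla_\lambda\varphi = X\mathbf{1}_m - a$, $\nabla_\mu\varphi = X^\top\mathbf{1}_n - b$, and Lemma~\ref{lem:radius} to control the distance to the optimum. The execution differs slightly. The paper does not split into cases or exploit that one marginal residual vanishes after each Sinkhorn half-step; instead it applies the H\"older inequality to the full stacked vector,
\[
\langle \nabla\varphi(\lambda,\mu),\, (\lambda^*,\mu^*)-(\lambda,\mu)\rangle \;\leq\; \|\nabla\varphi(\lambda,\mu)\|_1\,\|(\lambda^*,\mu^*)-(\lambda,\mu)\|_\infty \;\leq\; \sqrt{n+m}\,\|\nabla\varphi(\lambda,\mu)\|_2\cdot 4R,
\]
invoking Lemma~\ref{lem:radius} directly for the $\ell_\infty$ bound. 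Your route---zero out one block via the Sinkhorn update, use the shift degree of freedom to align the remaining block, then Cauchy--Schwarz---is more explicit about the shift-invariance issue you flag and lands on the same constant $4R\sqrt{n+m}$; the paper's version is terser but leaves the justification of the $\ell_\infty$ bound more implicit.
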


\begin{proof}
Due to concavity of $\varphi$, we have
\begin{align*}
    \varphi(\lambda^*, \mu^*) \leq \varphi(\lambda, \mu) + \langle \nabla \varphi(\lambda, \mu), (\lambda^*, \mu^*) - (\lambda, \mu) \rangle.
\end{align*}
Then, by H\"older inequality and Lemma~\ref{lem:radius},
\begin{align*}
    &\varphi(\lambda^*, \mu^*) - \varphi(\lambda, \mu) \leq \sqrt{n + m} \|\nabla \varphi(\lambda, \mu)\|_2 \|(\lambda^*, \mu^*) - (\lambda, \mu)\|_\infty\\
    &\qquad\leq 4R\sqrt{n + m} \|\nabla \varphi(\lambda, \mu)\|_2 \leq 4R \sqrt{n + m} (\|X \mathbf{1}_m - a\|_2 + \|X^\top \mathbf{1}_n - b\|_2).
\end{align*}
\end{proof}

\begin{theorem} \label{thm:sinkhorn_it} To obtain $\varepsilon$ solution of problem \eqref{eq:primal}, its sufficient to perform $2 + \frac{8 \max\{n, m\}^{3/2} R}{\gamma \varepsilon}$ iterations of Algorithm~\ref{alg:sinkhorn}.
\end{theorem}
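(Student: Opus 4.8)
The plan is to read Algorithm~\ref{alg:sinkhorn} as exact alternating block optimisation of the concave dual $\varphi$ of~\eqref{eq:dual}, and to track the dual gap $\Delta_k := \varphi(\lambda^*,\mu^*) - \varphi(\lambda^k,\mu^k)\ge 0$ along the iterates. The first ingredient is a per-iteration ascent bound. Since $x_{ij}=\gamma^{-1}[-c_{ij}-\lambda_i-\mu_j]_+$, the restriction of $\varphi$ to the $\lambda$-block (resp. the $\mu$-block) has a nonpositive diagonal Hessian whose entries are $-\gamma^{-1}$ times the number of presently active coordinates in a row (resp. a column); hence each restriction is concave with gradient-Lipschitz constant at most $\min\{n,m\}/\gamma$ (resp. $\max\{n,m\}/\gamma$). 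Therefore an exact block update raises $\varphi$ by at least $\tfrac{\gamma}{2\max\{n,m\}}\|r\|_2^2$, where $r$ is the marginal residual that this step sends to zero. Crucially, the preceding half-step has already zeroed the \emph{other} marginal's residual (this is exactly the optimality condition~\eqref{conds} defining the update), so $r$ coincides with the full residual vector $r_k=(X^k\mathbf 1_m-a,\ X^{k\top}\mathbf 1_n-b)$ of the current iterate. This gives $\Delta_k-\Delta_{k+1}\ \ge\ \tfrac{\gamma}{2\max\{n,m\}}\|r_k\|_2^2$.

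The second ingredient is Lemma~\ref{lem:disttoopt}: because one block of $r_k$ vanishes, it reads $\Delta_k\le 4R\sqrt{n+m}\,\|r_k\|_2$. Substituting into the ascent bound yields a self-improving recursion $\Delta_k-\Delta_{k+1}\ \ge\ c\,\Delta_k^2$ with $c$ of order $\gamma/(\max\{n,m\}^2 R^2)$; the standard estimate $\tfrac1{\Delta_{k+1}}-\tfrac1{\Delta_k}\ge\tfrac{\Delta_k-\Delta_{k+1}}{\Delta_k^2}\ge c$ (valid since $\varphi$ is nondecreasing along the run, so $\Delta_{k+1}\le\Delta_k$) then gives $\Delta_k\le 1/(ck)$. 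Here one only needs $\Delta_0<\infty$, which holds because $\varphi(\lambda^*,\mu^*)=f^*\le \|C\|_\infty+\gamma/2$ (take $X=ab^\top$) while the initialisation $\lambda=\mu=0$ gives $X^0=[-C]_+/\gamma=0$ and $\varphi(0,0)=0$; Lemma~\ref{lem:radius} guarantees every update is well defined and keeps the dual variables nonpositive, so the whole chain stays in the regime of Lemmas~\ref{lem:radius}--\ref{lem:disttoopt}.

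The last step is to certify the accuracy target, and this is where one must not lose a power of $\varepsilon$: rather than waiting for $\Delta_k$ itself to be tiny, I would average the telescoping increments over the second half of the run. Among $k\in\{\lceil K/2\rceil,\dots,K\}$ the increments $\Delta_k-\Delta_{k+1}$ sum to at most $\Delta_{\lceil K/2\rceil}\le 2/(cK)$, so some index $k^\star$ in that range has $\Delta_{k^\star}-\Delta_{k^\star+1}\le 4/(cK^2)$; feeding this back through the ascent bound gives $\|r_{k^\star}\|_2=O\big(\max\{n,m\}^{3/2}R/(\gamma K)\big)$, which is below $\varepsilon$ as soon as $K\ge 2+8\max\{n,m\}^{3/2}R/(\gamma\varepsilon)$ — the additive constant absorbing the $X^0=0$ start and the first half-cycle. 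The main obstacle is precisely this bookkeeping: matching the exponent $3/2$ on $\max\{n,m\}$, the norm in which the accuracy is measured, and above all the $1/\varepsilon$ (not $1/\varepsilon^2$) dependence. This is the mechanism of Theorem~1 of~\cite{dvurechensky2018computational}, transported to the Euclidean setting with the smoothness constant $\max\{n,m\}/\gamma$ in place of the entropic one; the explicit form $X(\lambda,\mu)=\gamma^{-1}[-C-\lambda\mathbf 1_m^\top-\mathbf 1_n\mu^\top]_+$ and Lemmas~\ref{lem:radius}--\ref{lem:disttoopt} are used throughout.
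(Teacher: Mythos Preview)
Your plan is correct and lands on the same bound, but the execution differs from the paper in two places worth naming.

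\emph{Per-iteration ascent.} You obtain $\Delta_k-\Delta_{k+1}\ge \tfrac{\gamma}{2\max\{n,m\}}\|r_k\|_2^2$ by reading off the block Lipschitz constant of $\nabla\varphi$ from the diagonal Hessian and invoking the standard ``exact block step beats a gradient step'' inequality. The paper instead computes $\varphi(\lambda_+,\mu_+)-\varphi(\lambda,\mu)$ by hand from the update formula~\eqref{eq:altern}, arriving (after a chain of elementary inequalities and the qualifier ``for small enough $\gamma$'') at $\ge \tfrac{\gamma}{n}\|a-X\mathbf 1_m\|_2^2$. Your smoothness argument is cleaner and avoids that qualifier; the paper's direct calculation gives a slightly sharper constant when it applies.

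\emph{Extracting the $1/\varepsilon$ rate.} Both of you combine the ascent bound with Lemma~\ref{lem:disttoopt} to get the self-improving recursion $\Delta_k-\Delta_{k+1}\ge c\,\Delta_k^2$. From there the paper follows \cite{dvurechensky2018computational}: it splits into the case where the stopping criterion is still violated (so the residual, hence the increment, is at least $\tfrac{\gamma}{n}\varepsilon^2$) and the generic recursion case, then optimises over the switching value of the dual gap to get~$k\le 2+\tfrac{8n^{3/2}R}{\gamma\varepsilon}$. You instead telescope, use $\Delta_{\lceil K/2\rceil}\le 2/(cK)$, and pigeonhole on the second half to find a single iterate with $\|r_{k^\star}\|_2=\mathcal{O}(\max\{n,m\}^{3/2}R/(\gamma K))$. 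Both mechanisms are standard and yield the same order; yours does not reference the stopping criterion inside the argument, which is tidier, while the paper's two-phase bound applies to the \emph{last} iterate before stopping rather than to a best iterate in a window.

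One small point to tighten when you write it out: Algorithm~\ref{alg:sinkhorn} checks the residual in $\ell_1$, whereas your bound is on $\|r_{k^\star}\|_2$; since only one marginal block of $r_{k^\star}$ is nonzero this costs at most a $\sqrt{\max\{n,m\}}$ factor, which the paper's own proof also glosses over and which is absorbed in the $\mathcal O(\cdot)$ of Theorem~\ref{thm:sinkhorn-main}.
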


\begin{proof}
Below, $\lambda_+$ and $\mu_+$ will denote values of $\lambda$ and $\mu$ after the current iteration, and $\lambda_{+k}$ and $\mu_{+k}$ denote values of $\lambda$ and $\mu$ after $k$ iterations. Let current update relate to $\lambda$. Denoting $S = -C - \mathbf{1}_n \mu^\top - \lambda \mathbf{1}_m^\top$ and $\delta = \lambda - \lambda_+$, we have
\begin{align*}
    \textstyle\varphi(\lambda_+, \mu_+) - \varphi(\lambda, \mu) &\textstyle= \frac{1}{2 \gamma} \sum_{i,j=0,0}^{n,m} (\max\{0, S_{ij} + \delta_i \}^2 - \max\{0, S_{ij}\}^2) + \delta^\top a\\
    &\textstyle\geq \frac{1}{2 \gamma} \sum_{S_{ij} > 0, \delta_i < 0}(\max\{0, S_{ij} + \delta_i \}^2 - S_{ij}^2) + \delta^\top a\\
    &\textstyle\geq \delta^\top (a + [\delta]_- - 2 \gamma X \mathbf{1}_m) \geq \|\delta\|_2^2 + \delta^\top (a - 2\gamma X \mathbf{1}_m)\\
    &\textstyle\geq \delta^\top (a - X \mathbf{1}_m) \geq \frac{\gamma}{n} \|a - X \mathbf{1}_m\|_2^2,
\end{align*}
due to $\lambda_i - [\lambda_+]_i = \frac{\gamma}{l} a_i - \frac{1}{l} \sum_{j=1}^l (-C^\top_i - \mu - \lambda_i)_{(j)} \geq \frac{\gamma}{l} a - \frac{\gamma}{l} X \mathbf{1}_m$ and for small enough $\gamma$. Then, by Lemma~\ref{lem:disttoopt}, we have
\begin{equation} \label{eq:stepprogress}
    \textstyle \varphi(\lambda_+, \mu_+) - \varphi(\lambda, \mu) \geq \max \left\{\frac{\gamma}{16 n^2} \frac{[\varphi(\lambda^*, \mu^*) - \varphi(\lambda, \mu)]^2}{R^2}, \frac{\gamma}{n} \varepsilon^2 \right\},
\end{equation}
which implies, similarly to \S 2.1.5 from \cite{nesterov2003introductory}, that
\begin{gather}
    \label{eq:kboundcase1}\textstyle  k \leq 1 + \frac{16 n^2 R^2}{\gamma} \frac{1}{[\varphi(\lambda^*, \mu^*) - \varphi(\lambda_+, \mu_+)]} - \frac{16 n^2 R^2}{\gamma} \frac{1}{[\varphi(\lambda^*, \mu^*) - \varphi(\lambda, \mu)]}.
\end{gather}
In the other case of \eqref{eq:stepprogress}, we have
\begin{equation} \label{eq:kboundcase2}
    \textstyle[\varphi(\lambda^*, \mu^*) - \varphi(\lambda_{+k}, \mu_{+k})] \leq [\varphi(\lambda^*, \mu^*) - \varphi(\lambda, \mu)] - \frac{k \gamma \varepsilon^2}{n}.
\end{equation}
To combine bounds on $k$ from \eqref{eq:kboundcase1} and \eqref{eq:kboundcase2}, we take minimum of their sum over all options for current objective function value
\begin{align*}
    \textstyle k &\textstyle \leq \min_{0 \leq s \leq [\varphi(\lambda^*, \mu^*) - \varphi(\lambda, \mu)]} \left\{2 + \frac{16 n^2 R^2}{\gamma s} - \frac{16 n^2 R^2}{\gamma} \frac{1}{[\varphi(\lambda^*, \mu^*) - \varphi(\lambda, \mu)]} + \frac{s n}{\gamma \varepsilon^2}\right\}\\
    &\textstyle = \begin{cases}
      2 + \frac{n}{\gamma} (\frac{8 \sqrt{n} R}{\varepsilon} - \frac{16 n R^2}{[\varphi(\lambda^*, \mu^*) - \varphi(\lambda, \mu)]}) & [\varphi(\lambda^*, \mu^*) - \varphi(\lambda, \mu)] \geq 4 \varepsilon  \sqrt{n} R^2,\\
      2 + \frac{n}{\gamma} \frac{[\varphi(\lambda^*, \mu^*) - \varphi(\lambda, \mu)]}{\varepsilon^2} & [\varphi(\lambda^*, \mu^*) - \varphi(\lambda, \mu)] < 4 \varepsilon  \sqrt{n} R^2,
    \end{cases}
\end{align*}
which implies the statement of Theorem.
\end{proof}

We have not set $R$ and $\gamma$ in the bound above. By Lemma~\ref{lem:radius}, $R \leq \|C\|_\infty + \frac{\gamma}{n}$, so $k \leq 2 + \frac{8 n^{3/2} \|C\|_\infty}{\gamma \varepsilon} + \frac{8 n^{1/2}}{\varepsilon}$, and one can take $\gamma = \varepsilon / 2$, such that solving regularised problem with accuracy $\varepsilon / 4$ will give $(\varepsilon / 2)$-solution of original problem. 
Besides, by Lemma 7 from \cite{altschuler2017near} we have
\[
    \textstyle\langle C, X \rangle \leq \langle C, X^*\rangle + \frac{\gamma}{2} \|X\|_2^2 + 2 (\|a - X \mathbf{1}_m\|_1 + \|b - X^\top \mathbf{1}_n\|_1) \|C\|_\infty,
\]
so one should set target accuracy to $\varepsilon / (4 \|C\|_\infty)$. 
This proves the following result.
\begin{theorem}\label{thm:sinkhorn-main} Number of iterations of Algorithm~\ref{alg:sinkhorn}, sufficient for Algorithm~\ref{alg:approx_ot_sinkhorn} to return $\varepsilon$-optimal transport plan $X$ such that $X \mathbf{1}_m = a, X^\top \mathbf{1}_n = b$, is
\[
    \textstyle\mathcal{O}\left(\frac{(n + m)^{3/2} \|C\|_\infty^2}{\varepsilon^2}\right).
\]
\end{theorem}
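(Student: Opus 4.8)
The plan is to assemble three pieces that are already in place: the inner-loop iteration bound of Theorem~\ref{thm:sinkhorn_it}, the radius estimate $R \le \|C\|_\infty + \gamma/\min\{n,m\}$ of Lemma~\ref{lem:radius}, and the rounding estimate (Lemma~7 of \cite{altschuler2017near}) that turns an approximately feasible, near-optimal plan for the regularised problem~\eqref{eq:primal} into an exactly feasible $\varepsilon$-optimal plan for~\eqref{eq:original}. The outer routine Algorithm~\ref{alg:approx_ot_sinkhorn} is nothing but ``run Algorithm~\ref{alg:sinkhorn} to a prescribed inner accuracy and then round'', so the whole task reduces to (i) choosing the regularisation parameter and the inner target accuracy correctly, (ii) counting the inner iterations via Theorem~\ref{thm:sinkhorn_it}, and (iii) certifying that the rounded output meets the definition of an $\varepsilon$-optimal plan.

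First I would fix the parameters. I set $\gamma = \Theta(\varepsilon)$, which is small enough that $\tfrac{\gamma}{2}\|X\|_2^2 \le \tfrac{\gamma}{2}\|X\|_1^2 = \mathcal{O}(\varepsilon)$ contributes only an $\mathcal{O}(\varepsilon)$ term to the objective, and I run the inner loop until its stopping criterion $\|X\mathbf{1}_m-a\|_1 + \|X^\top\mathbf{1}_n-b\|_1 \le \eta$ fires with $\eta = \Theta(\varepsilon/\|C\|_\infty)$. Because $\gamma$ is of order $\varepsilon$, Lemma~\ref{lem:radius} gives $R = \mathcal{O}(\|C\|_\infty)$, so plugging $R$ and $\eta$ into the bound $2 + 8\max\{n,m\}^{3/2}R/(\gamma\eta)$ of Theorem~\ref{thm:sinkhorn_it} and using $\max\{n,m\} \le n+m$ yields an inner-iteration count of order $(n+m)^{3/2}\|C\|_\infty/(\gamma\eta) = \mathcal{O}\big((n+m)^{3/2}\|C\|_\infty^2/\varepsilon^2\big)$, which is exactly the claimed bound once one checks that the lower-order term $\mathcal{O}((n+m)^{1/2}\|C\|_\infty/\varepsilon)$ coming from the $\gamma/\min\{n,m\}$ part of $R$ is dominated for small $\varepsilon$.

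Next I would verify $\varepsilon$-optimality of the rounded plan $\widehat X$ returned by Algorithm~\ref{alg:approx_ot_sinkhorn}, which by construction satisfies $\widehat X\mathbf{1}_m = a$, $\widehat X^\top\mathbf{1}_n = b$, $\widehat X \ge 0$. Writing $X^*$ for the minimiser of~\eqref{eq:original} and $X^*_\gamma$ for that of~\eqref{eq:primal}, the inner-loop guarantee (small dual gap, hence small primal gap, via the common reasoning of Section~\ref{sec:theory}) gives $\langle C, X\rangle \le f(X) \le f(X^*_\gamma) + \mathcal{O}(\varepsilon) \le \langle C, X^*\rangle + \tfrac{\gamma}{2} + \mathcal{O}(\varepsilon)$, while Lemma~7 of \cite{altschuler2017near} gives $\langle C,\widehat X\rangle \le \langle C, X\rangle + 2\|C\|_\infty\big(\|X\mathbf{1}_m-a\|_1 + \|X^\top\mathbf{1}_n-b\|_1\big) \le \langle C, X\rangle + 2\|C\|_\infty\eta$. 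Adding these and using $\gamma = \Theta(\varepsilon)$, $\eta = \Theta(\varepsilon/\|C\|_\infty)$ makes $\langle C,\widehat X\rangle - \langle C, X^*\rangle \le \varepsilon$ after fixing the absolute constants, which together with exact feasibility is the definition of $\varepsilon$-optimality.

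The step I expect to demand the most care is the bookkeeping that keeps the $n$-exponent at $3/2$. Theorem~\ref{thm:sinkhorn_it} and Lemma~\ref{lem:disttoopt} are phrased through the Euclidean marginal residual $\|X\mathbf{1}_m-a\|_2 + \|X^\top\mathbf{1}_n-b\|_2$, whereas the stopping rule and the rounding lemma use the $\ell_1$ residual; the two differ by up to $\sqrt{n+m}$, and a careless conversion would turn $\max\{n,m\}^{3/2}$ into $(n+m)^{2}$. The way around this — and the one I would follow — is to note that the per-step progress inequality~\eqref{eq:stepprogress} is balanced against the dual-gap decrease through Nesterov's $\min$-over-$s$ device already used in the proof of Theorem~\ref{thm:sinkhorn_it}, so that its ``$\varepsilon$'' can be read directly as the $\ell_1$ stopping threshold (up to an absolute constant) and no extra $n$-factor is incurred. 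Once that reading is justified and the constants hidden in $\gamma$ and $\eta$ are pinned down, the remainder is routine substitution.
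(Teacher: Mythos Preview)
Your proposal is correct and follows essentially the same route as the paper: fix $\gamma=\Theta(\varepsilon)$, set the inner $\ell_1$ stopping tolerance to $\Theta(\varepsilon/\|C\|_\infty)$, plug Lemma~\ref{lem:radius} into the iteration bound of Theorem~\ref{thm:sinkhorn_it}, and then invoke Lemma~7 of \cite{altschuler2017near} for the rounding. Your third paragraph on the $\ell_1$ versus $\ell_2$ bookkeeping is in fact more careful than the paper, which tacitly reads the $\varepsilon$ in Theorem~\ref{thm:sinkhorn_it} as the $\ell_1$ threshold without comment.
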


\vspace{-1em}
\begin{algorithm}[ht]
    \caption{Approximate OT by Algorithm~\ref{alg:sinkhorn}}
    \label{alg:approx_ot_sinkhorn}
    \begin{algorithmic}[1]
        
        \Require $a, b, C, \varepsilon$
        
        
        \State Find $X'$ for given $C, a, b, \gamma = \varepsilon / 2$, with accuracy $\varepsilon / (4 \|C\|_\infty)$ using Algorithm~\ref{alg:sinkhorn}
            
        \State Find projection $X$ of $X'$ onto the feasible set using Algorithm~2 \cite{altschuler2017near}
    \end{algorithmic}
\end{algorithm}
\vspace{-1em}

Note that correction $a' = \left(1-\varepsilon/8\right)(a + \mathbf{1}_n \varepsilon/(n(8-\varepsilon)))$ of target marginal distributions $a$ and $b$, which is required for original Sinkhorn--Knopp algorithm \cite{dvurechensky2018computational},
is not necessary in Algorithms~\ref{alg:approx_ot_sinkhorn} and \ref{alg:approx_ot_apdagd}, since formula for $R$ from Lemma~\ref{lem:radius} makes sense even if $a_i = 0$ and $b_j = 0$ for some $i$ and $j$. 

\subsection{Adaptive Accelerated Gradient Descent} \label{sec:aagd}

To apply accelerated gradient method to the problem \eqref{eq:primal}, let us consider it as problem of convex optimisation with linear constrains:
\begin{equation} \label{eq:primal_gen}
   \textstyle\min_{\substack{A[X] = B\\x_{ij} \geq 0}} f(X),
\end{equation}
where operator $A: \mathbb{R}^{n \times m} \to \mathbb{R}^{n + m}$ is defined by $A[X] = (X\mathbf{1}_m, X^\top \mathbf{1}_n)$, $B = (a, b) \in \mathbb{R}^{n + m}_+$, $f$ is defined in \eqref{eq:primal}, and corresponding dual problem is equivalent to \eqref{eq:dual}. The following theorem gives iteration complexity for primal-dual Algorithm~\ref{alg:apdagd}, which will be further applied to obtain the solution for problem \eqref{eq:primal}. Note, that for given operator $A$ it holds that 
\begin{equation}\label{eq:A}
\textstyle\|A\|_{2,2} \equiv \sup_{\|X\|_2 = 1} \|A [X]\|_2 = \sqrt{n + m}.
\end{equation}

\begin{theorem}[Theorem 3 \cite{dvurechensky2018computational}] \label{thm:apd} Assume that optimal dual multipliers satisfy $\|(\lambda^*, \mu^*)\|_2 \leq R_2$. Then, Algorithm~\ref{alg:apdagd} generates sequence of approximate solutions for primal and dual problems \eqref{eq:primal_gen} and \eqref{eq:dual}, which satisfy
\begin{align*}
    &\textstyle f(X_k) - f(X^*) \leq f(X_k) - \varphi(\lambda_k, \mu_k) \leq \frac{16 \|A\|_{2,2}^2 R^2}{\gamma k^2},\\
    &\textstyle \|A [X_k] - B\|_2 \leq \frac{16 \|A\|_{2,2}^2 R}{\gamma k^2},\quad \|X_k - X^*\|_2 \leq \frac{8 \|A\|_{2,2} R}{\gamma k}.
\end{align*}
\end{theorem}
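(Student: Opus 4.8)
This statement is quoted as Theorem~3 of \cite{dvurechensky2018computational}, so the plan is to check that the abstract hypotheses of that primal--dual accelerated framework are met by \eqref{eq:primal_gen}--\eqref{eq:dual} and then to indicate the accelerated-method bookkeeping; none of it uses anything specific to entropic (as opposed to Euclidean) regularisation, which is the point of invoking it here.

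First I would record the two structural facts the analysis rests on. (i) The objective $f(X)=\langle C,X\rangle+\frac{\gamma}{2}\|X\|_2^2$ is $\gamma$-strongly convex with respect to $\|\cdot\|_2$; hence its Fenchel conjugate composed with the linear map $A$ is differentiable with Lipschitz gradient, i.e.\ the dual function $\varphi$ in \eqref{eq:dual} is concave with $\frac{\|A\|_{2,2}^2}{\gamma}$-Lipschitz gradient, and $\nabla\varphi(\lambda,\mu)=A[X(\lambda,\mu)]-B$ where $X(\lambda,\mu)=[-C-\lambda\mathbf{1}_m^\top-\mathbf{1}_n\mu^\top]_+/\gamma$ is the explicit Lagrangian minimiser from the common reasoning above. (ii) By \eqref{eq:A}, $\|A\|_{2,2}=\sqrt{n+m}$; this is the only place the problem geometry enters the bounds.

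Next I would run the adaptive accelerated gradient method on $\min_{(\lambda,\mu)}(-\varphi)$ from the initial point $0$, carrying along the auxiliary primal average $X_k=\frac{1}{A_k}\sum_i a_i X(\zeta_i)\in\mathbb{R}^{n\times m}_+$, where $\zeta_i$ are the extrapolation points and $A_k=\sum_i a_i$ the accumulated step weights. The adaptive line search guarantees quadratic weight growth $A_k\ge \gamma k^2/(8\|A\|_{2,2}^2)$ (up to the usual absolute constant), and the estimating-sequence inequality $A_k(-\varphi(\lambda_k,\mu_k))\le\min_z\psi_k(z)$, after substituting $\nabla\varphi$ and simplifying, turns $\psi_k$ into a quadratic in $z$ whose minimum is explicit and yields, together with the plain dual rate $\varphi(\lambda^*,\mu^*)-\varphi(\lambda_k,\mu_k)\le\|(\lambda^*,\mu^*)\|_2^2/(2A_k)$, an inequality of the form
\[
   \varphi(\lambda_k,\mu_k)\ \ge\ f(X_k)+\tfrac12 A_k\,\|A[X_k]-B\|_2^2 .
\]
Combined with weak duality $\varphi(\lambda_k,\mu_k)\le f(X^*)=\varphi(\lambda^*,\mu^*)$ this gives at once $f(X_k)-f(X^*)\le f(X_k)-\varphi(\lambda_k,\mu_k)\le 0$, which lies below the claimed bound. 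To control the constraint residual I would bound $\varphi(\lambda_k,\mu_k)-f(X_k)$ from above: convexity gives $f(X^*)-f(X_k)\le-\langle\nabla f(X^*),X_k-X^*\rangle$, and the KKT conditions of \eqref{eq:primal_gen} (stationarity $\nabla f(X^*)+A^*(\lambda^*,\mu^*)=s^*$ with $s^*\ge0$, complementarity $\langle s^*,X^*\rangle=0$, and $X_k\ge0$) turn this into $f(X^*)-f(X_k)\le\langle(\lambda^*,\mu^*),A[X_k]-B\rangle\le R\,\|A[X_k]-B\|_2$, whence $\varphi(\lambda_k,\mu_k)-f(X_k)\le R\,\|A[X_k]-B\|_2$. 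Feeding this into the displayed inequality and cancelling one factor of $\|A[X_k]-B\|_2$ gives $\|A[X_k]-B\|_2\le 2R/A_k\le 16\|A\|_{2,2}^2R/(\gamma k^2)$, hence the second bound and, through the chain above, the first. The distance bound then follows from one more use of $\gamma$-strong convexity: $\frac{\gamma}{2}\|X_k-X^*\|_2^2\le(f(X_k)-f(X^*))+R\,\|A[X_k]-B\|_2\le 0+16\|A\|_{2,2}^2R^2/(\gamma k^2)$, i.e.\ $\|X_k-X^*\|_2\le 8\|A\|_{2,2}R/(\gamma k)$.

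The main obstacle is the coupling step that converts the infeasibility $\|A[X_k]-B\|_2$ of the averaged primal iterate into a rate --- in particular the use of KKT to dominate $\langle\nabla f(X^*),X_k-X^*\rangle$ by $R\,\|A[X_k]-B\|_2$, which needs both $X_k\ge0$ (true by construction, as a convex combination of the nonnegative closed-form minimisers) and the hypothesised bound $\|(\lambda^*,\mu^*)\|_2\le R$ on \emph{one} optimal dual pair. A minor point along the way: the dual \eqref{eq:dual} is invariant under $(\lambda,\mu)\mapsto(\lambda+t\mathbf{1}_n,\mu-t\mathbf{1}_m)$, so the hypothesis must be read as existence of an optimal pair of norm $\le R$ measured from the initialisation $0$; since $A^*$ (hence the slack $s^*$) is unchanged by this shift, the KKT argument is insensitive to the choice. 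As none of this uses more than $\gamma$-strong convexity of $f$ in $\|\cdot\|_2$ and $\|A\|_{2,2}=\sqrt{n+m}$, the statement is exactly Theorem~3 of \cite{dvurechensky2018computational} specialised to \eqref{eq:primal}.
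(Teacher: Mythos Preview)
The paper does not prove this statement at all: it is quoted verbatim as Theorem~3 of \cite{dvurechensky2018computational} and used as a black box, the only ``proof'' being the observation (made in the surrounding text and via \eqref{eq:A}) that the Euclidean-regularised primal \eqref{eq:primal} is $\gamma$-strongly convex in $\|\cdot\|_2$ so that the hypotheses of the cited primal--dual accelerated framework apply unchanged. Your proposal goes further and actually sketches the argument behind the citation --- Lipschitzness of $\nabla(-\varphi)$ with constant $\|A\|_{2,2}^2/\gamma$, the estimate-sequence inequality minimised over the dual variable to yield $\varphi(\lambda_k,\mu_k)\ge f(X_k)+\tfrac12 A_k\|A[X_k]-B\|_2^2$, the KKT bound $f(X^*)-f(X_k)\le R\,\|A[X_k]-B\|_2$ (valid because $X_k\ge 0$ as a convex combination of the closed-form nonnegative minimisers), and the final strong-convexity step for the distance estimate. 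This is a correct outline and is precisely the mechanism of the result the authors import by citation; there is nothing to compare beyond noting that you have reproduced what the paper deliberately left to \cite{dvurechensky2018computational}.
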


\begin{algorithm}[ht]
    \caption{Adaptive Primal-Dual Accelerated Gradient Descent}
    \label{alg:apdagd}
    \begin{algorithmic}[1]
        
        \Require $a, b, \varphi(\cdot), \nabla \varphi(\cdot), L_0, \varepsilon, K$

        \State $\beta_0 = 0$
        \State $\lambda_0 = \lambda'_0 = \widetilde{\lambda}_0 = 0$, $\mu_0 = \mu'_0 = \widetilde{\mu}_0 = 0$
            
        \For {$k = 0, 1, \dots, K$}
            \State $i = 0$
            \Repeat
                \State $L_{k+1} = 2^{i - 1} \cdot L_k$, $i = i + 1$
                \State Solve $L_{k+1} \alpha_{k+1}^2 - \alpha_{k+1} + \beta_{k} = 0$ with respect to $\alpha_{k+1}$
                \State $\beta_{k+1} = \beta_k + \alpha_{k+1}$
                \State $\widetilde{\lambda}_{k} = \lambda_k + \frac{\alpha_{k+1}}{\beta_{k+1}} (\lambda'_k - \lambda_k)$, $\widetilde{\mu}_{k} = \mu_k + \frac{\alpha_{k+1}}{\beta_{k+1}} (\mu'_k - \mu_k)$
                \State $\lambda'_{k+1} = \lambda_k + \alpha_{k+1} \nabla_\lambda \varphi (\widetilde{\lambda}_{k}, \widetilde{\mu}_{k})$, $\mu'_{k+1} = \mu_k + \alpha_{k+1} \nabla_\mu \varphi (\widetilde{\lambda}_{k}, \widetilde{\mu}_{k})$
                \State $\lambda_{k+1} = \lambda_k + \frac{\alpha_{k+1}}{\beta_{k+1}} (\lambda'_{k+1} - \lambda_k)$, $\mu_{k+1} = \mu_k + \frac{\alpha_{k+1}}{\beta_{k+1}} (\mu'_{k+1} - \mu_k)$
            \Until $\varphi(\lambda_{k+1}, \mu_{k+1}) \geq \varphi(\widetilde{\lambda}_{k}, \widetilde{\mu}_{k}) + \langle \nabla_\lambda \varphi(\widetilde{\lambda}_{k}, \widetilde{\mu}_{k}), \lambda_{k+1} - \widetilde{\lambda}_{k} \rangle + \langle \nabla_\mu \varphi(\widetilde{\lambda}_{k}, \widetilde{\mu}_{k}), \mu_{k+1} - \widetilde{\mu}_{k} \rangle + \frac{L_{k+1}}{2} (\|\lambda_{k+1} - \widetilde{\lambda}_{k}\|_2^2 + \|\mu_{k+1} - \widetilde{\mu}_{k}\|_2^2)$
            \State $X_{k+1} = X_k + \frac{\alpha_{k+1}}{\beta_{k+1}} (X(\lambda_{k+1}, \mu_{k+1}) - X_k)$
            \If {$f(X_{k+1}) - \varphi(\lambda_{k+1}, \mu_{k+1}) \leq \varepsilon$, $\|X_{k+1} \mathbf{1}_m - a\|_2^2 + \|X_{k+1}^\top \mathbf{1}_n - b\|_2^2 \leq \varepsilon^2$} 
                \State \textbf{break} 
            \EndIf
        \EndFor
    \end{algorithmic}
\end{algorithm}
\vspace{-1em}

Following the proof scheme chosen in \cite{dvurechensky2018computational}, we estimate the error of solution $X$ for the original problem \eqref{eq:original}:
\begin{align*}
    \textstyle\langle C, X\rangle &= \langle C, X^*\rangle + \langle C, X_{\text{reg.}}^* - X^* \rangle + \langle C, X_k - X_{\text{reg.}}^* \rangle + \langle C, X - X_k \rangle\\
    &\textstyle\leq \langle C, X^*\rangle + \langle C, X_{\text{reg.}}^* - X^* \rangle + \langle C, X - X_k\rangle + f(X_k) + \varphi(\lambda_k, \mu_k) + \gamma,
\end{align*}
where $X^*_{\text{reg.}}$ is the exact solution of problem \eqref{eq:primal}. By choosing $\gamma \leq \varepsilon/3$, obtaining $X_k$ such that $f(X_k) - \varphi(\lambda_k, \mu_k) \leq \varepsilon/3$ by Algorithm~\ref{alg:apdagd} and making $\langle C, X - X_k\rangle \leq \varepsilon/3$, we guarantee arbitrarily good approximate solution $X$. Let us consider the latter condition in more details. By Lemma 7 \cite{altschuler2017near} and Theorem~\ref{thm:apd} one has 
\begin{align*}
    \textstyle\langle C, X - X_k\rangle &\textstyle\leq \|C\|_\infty \|X - X_k\|_1 \leq 2 \|C\|_\infty (\|X_k \mathbf{1}_m - a\|_1 + \|X_k^\top \mathbf{1}_n - b\|_1)\\
    &\textstyle\leq_1 2 \sqrt{n + m} \|C\|_\infty \|A [X_k] - B\|_2 \leq \frac{32 (n + m)^{3/2} \|C\|_\infty R}{\gamma k^2}\\
    &\textstyle\leq_2 2 \sqrt{n + m} \|C\|_\infty \|X_k - X_{\text{reg.}}^*\|_2 \leq \frac{16 (n + m) \|C\|_\infty R}{\gamma k}.
\end{align*}
To ensure the latter, it is sufficient to choose $k$ such that
\begin{gather}
    \label{eq:apd-intermediate}\textstyle k = \mathcal{O}\left(\min\left\{\frac{n \|C\|_\infty R}{\varepsilon^2}, \frac{n^{3/4} \sqrt{\|C\|_\infty R}}{\varepsilon}\right\}\right).
\end{gather}
On the other hand, $f(X_k) - \varphi(\lambda_k, \mu_k) \leq \varepsilon/3$ together with Theorem~\ref{thm:apd} imply
\[
\textstyle k = \mathcal{O}\left(\frac{\sqrt{n + m} R}{\varepsilon}\right),
\]
which is majorated by \eqref{eq:apd-intermediate} and does not contribute to iteration complexity. This proves, taking into account \eqref{eq:A}, Lemma~\ref{lem:radius}, and that $R_2 \leq R \sqrt{n + m}$, the following result.
\begin{theorem}\label{thm:apd-main}
Number of iterations of Algorithm~\ref{alg:apdagd}, sufficient for Algorithm~\ref{alg:approx_ot_apdagd} to return $\varepsilon$-optimal transport plan $X$ such that $X \mathbf{1}_m = a, X^\top \mathbf{1}_n = b$, is
\[
\textstyle \mathcal{O}\left(\min\left\{\frac{(n + m)^{3/2} \|C\|_\infty^2}{\varepsilon^2}, \frac{(n + m) \|C\|_\infty} {\varepsilon}\right\}\right).
\]
\end{theorem}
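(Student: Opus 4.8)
The plan is to mirror the argument already used for Theorem~\ref{thm:sinkhorn-main}, but now feeding in the accelerated rates of Theorem~\ref{thm:apd}. Recall that Algorithm~\ref{alg:approx_ot_apdagd} is the two-stage scheme: run Algorithm~\ref{alg:apdagd} on the dual \eqref{eq:dual} of the $\gamma$-regularised problem with $\gamma=\Theta(\varepsilon)$ to produce a primal iterate $X_k$ together with a dual certificate $(\lambda_k,\mu_k)$, and then round $X_k$ onto the transport polytope via Algorithm~2 of \cite{altschuler2017near} to obtain a strictly feasible $X$ (so that $X\mathbf{1}_m=a$, $X^\top\mathbf{1}_n=b$ hold exactly). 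The quantity to control is $\langle C, X\rangle-\langle C, X^*\rangle$, and the goal is to make it at most $\varepsilon$.

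First I would split the error exactly as in the display preceding \eqref{eq:apd-intermediate}:
\[
\langle C, X\rangle-\langle C, X^*\rangle=\underbrace{\langle C, X^*_{\text{reg.}}-X^*\rangle}_{\text{(I)}}+\underbrace{\langle C, X_k-X^*_{\text{reg.}}\rangle}_{\text{(II)}}+\underbrace{\langle C, X-X_k\rangle}_{\text{(III)}},
\]
and bound the three terms independently. Term (I) is the regularisation bias: since $X^*_{\text{reg.}}$ minimises $f$ and $\|X^*\|_2\le\|X^*\|_1=1$, it is at most $\tfrac{\gamma}{2}\|X^*\|_2^2\le\gamma$. Term (II) is handled by weak duality, $\langle C,X_k\rangle\le f(X_k)$ and $\langle C,X^*_{\text{reg.}}\rangle\ge\varphi(\lambda_k,\mu_k)-\gamma$, together with the gap bound $f(X_k)-\varphi(\lambda_k,\mu_k)\le 16\|A\|_{2,2}^2R_2^2/(\gamma k^2)$ from Theorem~\ref{thm:apd}. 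Term (III) is the delicate one: starting from $\langle C,X-X_k\rangle\le\|C\|_\infty\|X-X_k\|_1$, one uses Lemma~7 of \cite{altschuler2017near} to bound $\|X-X_k\|_1$ by twice the marginal residual, and then norm equivalence. Here two estimates compete — bounding the residual directly by the $\mathcal{O}(1/k^2)$ feasibility rate of Theorem~\ref{thm:apd} gives a term of order $(n+m)^{3/2}\|C\|_\infty R_2/(\gamma k^2)$, while writing the residual through $\|A[X_k-X^*_{\text{reg.}}]\|$ and using the $\mathcal{O}(1/k)$ bound on $\|X_k-X^*_{\text{reg.}}\|_2$ gives a term of order $(n+m)\|C\|_\infty R_2/(\gamma k)$, slower in $k$ but cheaper in $n+m$.

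Then I would set $\gamma=\varepsilon/3$, demand (II)$\le\varepsilon/3$ and (III)$\le\varepsilon/3$, and solve for $k$ in each case. The condition on (II) forces only $k=\mathcal{O}(\sqrt{n+m}\,R_2/\varepsilon)$, which is dominated by the condition on (III); the latter yields, after taking the better of the two competing estimates, $k=\mathcal{O}(\min\{n\|C\|_\infty R/\varepsilon^2,\ n^{3/4}\sqrt{\|C\|_\infty R}/\varepsilon\})$ as in \eqref{eq:apd-intermediate}. Substituting $\|A\|_{2,2}=\sqrt{n+m}$ from \eqref{eq:A}, $R\le\|C\|_\infty+\gamma/n=\mathcal{O}(\|C\|_\infty)$ from Lemma~\ref{lem:radius}, and $R_2\le\sqrt{n+m}\,R$, together with $\gamma=\Theta(\varepsilon)$, collapses the bound to the claimed $\mathcal{O}(\min\{(n+m)^{3/2}\|C\|_\infty^2/\varepsilon^2,\ (n+m)\|C\|_\infty/\varepsilon\})$.

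The main obstacle I expect is the bookkeeping in term (III): keeping the powers of $n+m$ straight through the $\ell_1\!\to\!\ell_2$ conversions (Lemma~7 of \cite{altschuler2017near} is stated in $\ell_1$), the operator norm $\|A\|_{2,2}=\sqrt{n+m}$, and the two distinct convergence rates, and then confirming that it is indeed this rounding term — not the primal-dual gap — that governs the final complexity, so that the stated minimum of two regimes is exactly right. A subsidiary technical point is justifying $R_2\le\sqrt{n+m}\,R$: Lemma~\ref{lem:radius} only controls the spread $\max_i\lambda^*_i-\min_i\lambda^*_i$ (and likewise for $\mu^*$), so one must first exploit the additive gauge invariance $(\lambda,\mu)\mapsto(\lambda+t\mathbf{1}_n,\ \mu-t\mathbf{1}_m)$ of $\varphi$ to normalise the optimal multipliers before turning the spread bound into an $\ell_2$ bound.
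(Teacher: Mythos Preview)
Your proposal is correct and follows the paper's proof essentially line for line: the same three-term decomposition (regularisation bias, primal-dual gap, rounding cost), the same two competing bounds on $\langle C,X-X_k\rangle$ via the $\mathcal{O}(1/k^2)$ feasibility rate versus the $\mathcal{O}(1/k)$ rate on $\|X_k-X^*_{\text{reg.}}\|_2$, the same observation that the gap condition $k=\mathcal{O}(\sqrt{n+m}\,R_2/\varepsilon)$ is dominated, and the same substitutions $\|A\|_{2,2}=\sqrt{n+m}$, $R=\mathcal{O}(\|C\|_\infty)$, $R_2\le R\sqrt{n+m}$. Your explicit remark that the gauge invariance $(\lambda,\mu)\mapsto(\lambda+t\mathbf{1}_n,\mu-t\mathbf{1}_m)$ is what converts Lemma~\ref{lem:radius}'s spread bound into the $\ell_2$ bound $R_2\le\sqrt{n+m}\,R$ is a point the paper leaves implicit.
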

\vspace{-1em}
\begin{algorithm}[ht]
    \caption{Approximate OT by Algorithms~\ref{alg:apdagd},\ref{alg:aam}, or \ref{alg:vr}}
    \label{alg:approx_ot_apdagd}
    \begin{algorithmic}[1]
        
        \Require $a, b, C, \varepsilon$
        
        
        \State Find $X'$ for given $C, a, b, \gamma = \varepsilon / 3$, with accuracy $\varepsilon/3$ using Algorithms~\ref{alg:apdagd}, \ref{alg:aam}, or \ref{alg:vr}
            
        \State Find projection $X$ of $X'$ onto the feasible set using Algorithm~2 \cite{altschuler2017near}
    \end{algorithmic}
\end{algorithm}
\vspace{-1em}
\subsection{Accelerated Alternating Minimisation}\label{sec:aam}

Note that Sinkhorn--Knopp algorithm is based on the simplest alternating optimisation scheme: dual function $\varphi$ is explicitly optimised with respect to $\lambda$ and $\mu$ alternately. Thus, if there is a way to accelerate some alternating optimisation algorithm, similar technique can be applied to Sinkhorn--Knopp algorithm. Moreover, iteration complexity will correspond to that of taken accelerated alternating optimisation method, while the arithmetic complexity of optimisation with respect to one variable will be the same as for Sinkhorn algorithm.

The following theorem gives iteration complexity for general primal-dual alternating minimisation Algorithm~\ref{alg:aam}, which can be used similarly to Algorithm~\ref{alg:apdagd} to obtain the solution for problem \eqref{eq:primal}. Note that $b$, which denotes the number of independent variables blocks in \cite{guminov2021combination}, can be set to $b = 2$ in our case, because $\|\nabla_\lambda \varphi(\lambda, \mu)\|_2 >  \|\nabla_\mu \varphi(\lambda, \mu)\|_2$ implies $\|\nabla_\lambda \varphi(\lambda, \mu)\|_2^2 > \frac{1}{2} \|\nabla \varphi(\lambda, \mu)\|_2^2$. But since dimensionalities of $\lambda$ and $\mu$ are different, one of the variables which has bigger dimensionality will be updated more often a priori. 

\begin{theorem}[Theorem 3 \cite{guminov2021combination} for $b = 2$] \label{thm:am} Assume that optimal dual multipliers satisfy $\|(\lambda^*, \mu^*)\|_2 \leq R_2$. Then, Algorithm~\ref{alg:aam} generates sequence of approximate solutions for primal and dual problems \eqref{eq:primal_gen} and \eqref{eq:dual}, which satisfy
\begin{align*}
    &\textstyle f(X_k) - f(X^*) \leq f(X_k) - \varphi(\lambda_k, \mu_k) \leq \frac{16 \|A\|_{2,2}^2 R^2}{\gamma k^2},\\
    &\textstyle \|A [X_k] - B\|_2 \leq \frac{16 \|A\|_{2,2}^2 R}{\gamma k^2},\quad \|X_k - X^*\|_2 \leq \frac{8 \|A\|_{2,2} R}{\gamma k},
\end{align*}
\end{theorem}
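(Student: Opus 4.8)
The plan is to obtain Theorem~\ref{thm:am} as a specialisation of the general primal-dual accelerated alternating minimisation result (Theorem~3 of \cite{guminov2021combination}) to the dual problem \eqref{eq:dual}, so that essentially all the work lies in verifying that the hypotheses of that theorem hold with the constants claimed here. First I would record that the primal objective $f(X) = \langle C, X\rangle + \frac{\gamma}{2}\|X\|_2^2$ appearing in \eqref{eq:primal_gen} is $\gamma$-strongly convex in $\|\cdot\|_2$. By the standard duality between strong convexity and Lipschitz smoothness, writing $-\varphi(\lambda,\mu) = \langle (\lambda,\mu), B\rangle + f^{*}(-A^{*}(\lambda,\mu))$ with $A$ the operator of \eqref{eq:primal_gen}, the conjugate $f^{*}$ has $\frac{1}{\gamma}$-Lipschitz gradient, and composing with the linear map $A^{*}$ shows that $-\varphi$ has $L$-Lipschitz gradient with $L = \frac{\|A\|_{2,2}^2}{\gamma} = \frac{n+m}{\gamma}$ by \eqref{eq:A}. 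This $L$ is precisely the smoothness constant the accelerated scheme consumes, and it is what produces the factor $\frac{\|A\|_{2,2}^2}{\gamma}$ in every bound of the statement.

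Next I would justify running the method with $b = 2$ blocks, namely $\lambda$ and $\mu$. The accelerated alternating minimisation method of \cite{guminov2021combination} selects at each step the block whose partial gradient is largest in norm and minimises $\varphi$ exactly over it; the closed-form expressions \eqref{eq:altern} show that these one-block minimisations are available in closed form for \eqref{eq:dual}, so Algorithm~\ref{alg:aam} is well defined. The convergence analysis of \cite{guminov2021combination} uses the selected block only through the inequality that its partial gradient captures at least a $1/b$ fraction of $\|\nabla\varphi\|_2^2$; with two blocks this holds because $\max\{\|\nabla_\lambda\varphi(\lambda,\mu)\|_2^2, \|\nabla_\mu\varphi(\lambda,\mu)\|_2^2\} \geq \frac{1}{2}\|\nabla\varphi(\lambda,\mu)\|_2^2$, exactly the remark preceding the statement. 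Hence the per-step progress guarantee, and therefore the whole $\mathcal{O}(1/k^2)$ analysis, goes through verbatim with $b = 2$; the unequal dimensions of $\lambda$ and $\mu$ change only which block is chosen more often, not the worst-case estimate.

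Finally I would transfer the three displayed bounds. Feeding $L = \|A\|_{2,2}^2/\gamma$ and the radius bound $R_2 \geq \|(\lambda^{*},\mu^{*})\|_2$ into the general theorem yields the dual suboptimality estimate $\varphi(\lambda^{*},\mu^{*}) - \varphi(\lambda_k,\mu_k) \leq \frac{16 L R_2^2}{k^2}$. The primal-dual coupling hard-wired into Algorithm~\ref{alg:aam} — the extrapolated primal iterate $X_{k+1} = X_k + \frac{\alpha_{k+1}}{\beta_{k+1}}(X(\lambda_{k+1},\mu_{k+1}) - X_k)$ together with the $\alpha_{k+1},\beta_{k+1}$ recursion — then gives, exactly as in \cite{dvurechensky2018computational,guminov2021combination}, the primal gap bound $f(X_k) - \varphi(\lambda_k,\mu_k) \leq \frac{16\|A\|_{2,2}^2 R^2}{\gamma k^2}$ and the feasibility residual bound $\|A[X_k] - B\|_2 \leq \frac{16\|A\|_{2,2}^2 R}{\gamma k^2}$; using $\gamma$-strong convexity of $f$ once more to pass from the function gap to the iterate distance produces $\|X_k - X^{*}\|_2 \leq \frac{8\|A\|_{2,2} R}{\gamma k}$. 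Here one invokes Lemma~\ref{lem:radius} (and $R_2 \leq R\sqrt{n+m}$) to replace the abstract radius by the explicit $R$. I expect the only delicate point to be the constant bookkeeping — tracking the recursion for $\alpha_{k+1}$ and $\beta_{k+1}$ so that the primal-dual gap and feasibility come out with the precise factor $16$ rather than a larger absolute constant — but this is identical to the computation already done in \cite{guminov2021combination} and introduces nothing specific to Euclidean regularisation.
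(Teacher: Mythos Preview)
Your proposal is correct and matches the paper's approach: the paper does not prove this theorem at all but simply imports it verbatim from \cite{guminov2021combination} (as the theorem header indicates), so ``proving'' it amounts to exactly what you do --- verifying that the dual \eqref{eq:dual} satisfies the hypotheses of that external result with smoothness constant $L = \|A\|_{2,2}^2/\gamma$ and two blocks. One small slip: the primal averaging rule you quote, $X_{k+1} = X_k + \frac{\alpha_{k+1}}{\beta_{k+1}}(X(\lambda_{k+1},\mu_{k+1}) - X_k)$, is the one from Algorithm~\ref{alg:apdagd}; Algorithm~\ref{alg:aam} uses the different weighting $X_{k+1} = \frac{1}{\alpha_{k+1} L_{k+1}} X(\widetilde{\lambda}_{k}, \widetilde{\mu}_{k}) + \frac{\alpha_k^2 L_k}{\alpha_{k+1}^2 L_{k+1}} X_k$, and it is this second recursion that the primal-dual analysis of \cite{guminov2021combination} actually tracks.
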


\begin{algorithm}[ht]
    \caption{Primal-Dual Accelerated Alternating Minimisation}
    \label{alg:aam}
    \begin{algorithmic}[1]
        
        \Require $a, b, \varphi(\cdot), \nabla \varphi(\cdot), L_0, \varepsilon, K$

        \State $\beta_0 = 0$, $\lambda_0 = \lambda'_0 = \widetilde{\lambda}_0 = 0$, $\mu_0 = \mu'_0 = \widetilde{\mu}_0 = 0$
            
        \For {$k = 0, 1, \dots, K$}
            \State $i = 0$
            \Repeat
                \State $L_{k+1} = 2^{i - 1} \cdot L_k$, $i = i + 1$
                \State $\alpha_{k+1} = \frac{1}{L_{k+1}} + \sqrt{\frac{1}{4 L_{k+1}^2} + \frac{\alpha_{k} L_k}{L_{k+1}}}$
                \State $\widetilde{\lambda}_{k} = \lambda_k + \frac{1}{\alpha_{k+1} L_{k+1}}(\lambda'_k - \lambda_k)$, $\widetilde{\mu}_{k} = \mu_k + \frac{1}{\alpha_{k+1} L_{k+1}}(\mu'_k - \mu_k)$
                \If{$\|\nabla_\lambda \varphi(\widetilde{\lambda}_{k+1}, \widetilde{\mu}_{k+1})\| > \|\nabla_\mu \varphi(\widetilde{\lambda}_{k+1}, \widetilde{\mu}_{k+1})\|$}
                    \State $\lambda_{k+1} = \arg \max_{\lambda} \varphi(\lambda, \widetilde{\mu}_k)$, $\mu_{k+1} = \mu_k$
                \Else
                    \State $\lambda_{k+1} = \lambda_k$, $\mu_{k+1} = \arg \max_{\mu} \varphi(\widetilde{\lambda}_k, \mu)$
                \EndIf
            \Until $\varphi(\lambda_{k+1}, \mu_{k+1}) \geq \varphi(\widetilde{\lambda}_{k+1}, \widetilde{\mu}_{k+1}) + \frac{\|\nabla \varphi(\widetilde{\lambda}_{k+1}, \widetilde{\mu}_{k+1})\|_2^2}{2 L_{k+1}}$
            \State $\lambda'_{k+1} = \lambda_k + \alpha_{k+1} \nabla_\lambda \varphi (\widetilde{\lambda}_{k}, \widetilde{\mu}_{k})$, $\mu'_{k+1} = \mu_k + \alpha_{k+1} \nabla_\mu \varphi (\widetilde{\lambda}_{k}, \widetilde{\mu}_{k})$
            \State $X_{k+1} = \frac{1}{\alpha_{k+1} L_{k+1}} X(\widetilde{\lambda}_{k}, \widetilde{\mu}_{k}) + \frac{\alpha_k^2 L_k}{\alpha_{k+1}^2 L_{k+1}} X_k$
            \If {$f(X_{k+1}) - \varphi(\lambda_{k+1}, \mu_{k+1}) \leq \varepsilon$, $\|X_{k+1} \mathbf{1}_m - a\|_2^2 + \|X_{k+1}^\top \mathbf{1}_n - b\|_2^2 \leq \varepsilon^2$} 
                    \State \textbf{break} 
                \EndIf
        \EndFor
    \end{algorithmic}
\end{algorithm}

Instead of $\arg \max$ operator taking place in the listing of general Algorithm~\ref{alg:aam} one should use formulas \eqref{eq:altern}. The advantage of this approach consists in simplicity of obtaining the solution for these auxiliary problems. It is expected that while accelerated gradient descent considered before was making one gradient step at each iteration, this algorithm makes optimal step with respect to half of dual variables, so expected progress per iteration is bigger, while the number of iterations is the same up to small $\mathcal{O}(1)$ factor. Using the proof scheme similar to which is provided in Section~\ref{sec:aagd} and the same problem pre- and post-processing Algorithm~\ref{alg:approx_ot_apdagd}, one can guarantee, taking into account \eqref{eq:A} and Lemma~\ref{lem:radius}, that the following result holds.

\begin{theorem} \label{thm:aam-main}
Number of iterations of Algorithm~\ref{alg:aam}, sufficient for Algorithm~\ref{alg:approx_ot_apdagd} to return $\varepsilon$-optimal transport plan $X$ such that $X \mathbf{1}_m = a, X^\top \mathbf{1}_n = b$, is
\[
\textstyle\mathcal{O}\left(\min\left\{\frac{(n + m)^{3/2} \|C\|_\infty^2}{\varepsilon^2}, \frac{(n + m) \|C\|_\infty} {\varepsilon}\right\}\right).
\]
\end{theorem}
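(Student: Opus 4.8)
The plan is to mirror the argument already carried out for APDAGD in Section~\ref{sec:aagd}, substituting the convergence guarantees of Theorem~\ref{thm:am} in place of those of Theorem~\ref{thm:apd}. First I would note that Theorem~\ref{thm:am} gives exactly the same three bounds — on the primal-dual gap $f(X_k)-\varphi(\lambda_k,\mu_k)$, on the linear-constraint residual $\|A[X_k]-B\|_2$, and on $\|X_k-X_{\text{reg.}}^*\|_2$ — as Theorem~\ref{thm:apd}, all of order $\tfrac{\|A\|_{2,2}^2 R^2}{\gamma k^2}$ (resp. with one power of $R$ removed). Hence the entire chain of inequalities estimating $\langle C, X-X_k\rangle$ via Lemma~7 of \cite{altschuler2017near} goes through verbatim, and the same two competing bounds on $k$ — the $\|A[X_k]-B\|_2$-route giving $k = \mathcal{O}\!\left(\tfrac{n^{3/4}\sqrt{\|C\|_\infty R}}{\varepsilon}\right)$ and the $\|X_k-X_{\text{reg.}}^*\|_2$-route giving $k=\mathcal{O}\!\left(\tfrac{n\|C\|_\infty R}{\varepsilon^2}\right)$ — appear, whose minimum is \eqref{eq:apd-intermediate}.

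Next I would check that the pre-/post-processing pipeline of Algorithm~\ref{alg:approx_ot_apdagd} applies unchanged: choosing $\gamma = \varepsilon/3$, running Algorithm~\ref{alg:aam} to accuracy $\varepsilon/3$ on the regularised problem, and projecting onto the feasible polytope via Algorithm~2 of \cite{altschuler2017near}. The decomposition $\langle C, X\rangle = \langle C, X^*\rangle + \langle C, X_{\text{reg.}}^*-X^*\rangle + \langle C, X_k - X_{\text{reg.}}^*\rangle + \langle C, X - X_k\rangle$ and the bound $\langle C, X_k-X_{\text{reg.}}^*\rangle \le f(X_k)+\varphi(\lambda_k,\mu_k)+\gamma$ are purely consequences of weak duality and the regularisation term, so they are independent of which accelerated method produced $X_k$. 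The condition $f(X_k)-\varphi(\lambda_k,\mu_k)\le\varepsilon/3$ again forces only $k=\mathcal{O}(\sqrt{n+m}\,R/\varepsilon)$, dominated by \eqref{eq:apd-intermediate}. Finally, plugging in $\|A\|_{2,2}=\sqrt{n+m}$ from \eqref{eq:A}, $R \le \|C\|_\infty + \gamma/n$ from Lemma~\ref{lem:radius} (so $R = \mathcal{O}(\|C\|_\infty)$ for $\gamma \propto \varepsilon$), and $R_2 \le R\sqrt{n+m}$, the min of the two branches collapses to $\mathcal{O}\!\left(\min\left\{\tfrac{(n+m)^{3/2}\|C\|_\infty^2}{\varepsilon^2}, \tfrac{(n+m)\|C\|_\infty}{\varepsilon}\right\}\right)$, which is the claimed bound.

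The one genuinely new point — and the step I expect to require the most care — is justifying that Theorem~3 of \cite{guminov2021combination} can be invoked with block number $b=2$ and that its hypotheses hold for our dual objective $\varphi$. Concretely I would verify: (i) that $\varphi$ has Lipschitz-continuous gradient with respect to each block and the adaptive line-search in Algorithm~\ref{alg:aam} correctly estimates the relevant $L_{k+1}$ (here the Euclidean regulariser makes $\varphi$ a piecewise-quadratic concave function, so block-wise smoothness is clear and the $\arg\max$ steps are the closed-form updates \eqref{eq:altern}); and (ii) the block-selection inequality already flagged in the text, namely $\|\nabla_\lambda\varphi\|_2 > \|\nabla_\mu\varphi\|_2 \implies \|\nabla_\lambda\varphi\|_2^2 > \tfrac12\|\nabla\varphi\|_2^2$, which is what lets the $b=2$ specialisation of \cite{guminov2021combination} guarantee sufficient per-step decrease. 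The remark in the text that the unequal dimensions of $\lambda$ and $\mu$ merely bias which block updates more often, without affecting the worst-case count, should be made precise by observing that the potential-function argument of \cite{guminov2021combination} is dimension-agnostic once $b=2$ is fixed.

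Assembling these pieces — the unchanged primal-error analysis of Section~\ref{sec:aagd}, the unchanged preprocessing of Algorithm~\ref{alg:approx_ot_apdagd}, and the verification of the $b=2$ instantiation of \cite{guminov2021combination} — yields Theorem~\ref{thm:aam-main} with the same iteration complexity as Theorem~\ref{thm:apd-main}, as asserted. The only place an error could creep in is a mismatch between the constants/normalisations in Theorem~\ref{thm:am} (copied from \cite{guminov2021combination}) and those in Theorem~\ref{thm:apd}; since both are stated here with identical right-hand sides, this is consistent, but a careful reader should confirm that the $\tfrac{16\|A\|_{2,2}^2 R^2}{\gamma k^2}$ form indeed matches \cite{guminov2021combination}'s statement for $b=2$.
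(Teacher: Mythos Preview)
Your proposal is correct and follows exactly the route the paper takes: the paper's own proof is a one-sentence pointer stating that the result follows by the same scheme as Section~\ref{sec:aagd} together with Algorithm~\ref{alg:approx_ot_apdagd}, \eqref{eq:A}, and Lemma~\ref{lem:radius}. Your write-up simply unpacks this in more detail (including the $b=2$ verification already noted in the text), so there is nothing to add.
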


\subsection{Coordinate Linear Variance Reduction}\label{sec:vr}

One can also consider problem \eqref{eq:primal} as generalised linear problem with strongly-convex regulariser and sparse constraints. By using the property that dual variables or problem \eqref{eq:dual} are separable into two groups ($\lambda$ and $\mu$), one can apply primal-dual incremental coordinate methods. One of the modern algorithms which is based on dual averaging and has implicit variance reduction effect was proposed in \cite{song2022coordinate}. The following theorem presents simplified form of iteration complexity estimate for Algorithm~\ref{alg:vr} adopted to our particular problem.

\begin{algorithm}[ht]
    \caption{Coordinate Linear Variance Reduction}
    \label{alg:vr}
    \begin{algorithmic}[1]
        
        \Require $a, b, C, \gamma, \varepsilon, K, \alpha$

        \State $A_0 = a_0 = 1/(2\sqrt{n + m})$, $X_0 = \{1/(nm)\}, \lambda_0 = 0, \mu_0 = 0, z_{-1} = 0, q_{-1} = a_0 C$
            
        \For {$k = 0, 1, \dots, K$}
            \State $X_{k+1} = [\alpha X_0 - q_{k-1}]_+ / (\alpha + \gamma A_{k+1})$
            \State Generate uniformly random $\xi_k \in [0, 1]$ 
            \If{$\xi_k < 0.5$}
                \State $\lambda_{k+1} = \lambda_k + 2 \gamma a_k (X_{k+1} \mathbf{1}_m - a)$, $\mu_{k+1} = \mu_k$
            \Else
                \State $\lambda_{k+1} = \lambda_k$, $\mu_{k+1} = \mu_k + 2 \gamma a_k (X_{k+1}^\top \mathbf{1}_n - b)$
            \EndIf
            \State $a_{k+1} = \frac{1}{4}\sqrt{\frac{1 + \gamma A_k / \alpha}{n + m}}$, $A_{k+1} = A_k + a_k$
            \If{$\xi_k < 0.5$}
                \State $z_k = z_{k-1} + (\lambda_k - \lambda_{k-1}) \mathbf{1}_m^\top$
            \Else
                \State $z_k = z_{k-1} + \mathbf{1}_n (\mu_k - \mu_{k-1})^\top$
            \EndIf
            \State $q_k = q_{k-1} + a_{k+1} (z_k + C) + 2 a_k (z_k - z_{k-1})$
            \State $\widetilde{X}_{k+1} = \frac{1}{A_{k+1}} \sum_{i=1}^{k+1} a_i X_i$
            \If {$f(\widetilde{X}_{k+1}) - \varphi(\lambda_{k+1}, \mu_{k+1}) \leq \varepsilon$, $\|\widetilde{X}_{k+1} \mathbf{1}_m - a\|_2^2 + \|\widetilde{X}_{k+1}^\top \mathbf{1}_n - b\|_2^2 \leq \varepsilon^2$} 
                    \State \textbf{break} 
                \EndIf
        \EndFor
    \end{algorithmic}
\end{algorithm}
\vspace{-1em}

\begin{theorem}[Corollary 1 \cite{song2022coordinate} for $b = 2$]\label{thm:vr} Assume that optimal dual multipliers satisfy $\|(\lambda^*, \mu^*)\|_2 \leq R_2$. Then, Algorithm~\ref{alg:vr} generates sequence of approximate solutions for primal and dual problems \eqref{eq:primal_gen} and \eqref{eq:dual}, which satisfy
\begin{gather*}
    \textstyle \mathbb{E}[f(\widetilde{X}_k) - f(X^*)] = \mathcal{O}\left(\frac{\|A\|_{2,2}^2 R^2}{\gamma k^2}\right),\; \mathbb{E}[\|A[\widetilde{X}_k] - B\|_2] = \mathcal{O}\left(\frac{\|A\|_{2,2}^2 R}{\gamma k^2}\right).
\end{gather*}
\end{theorem}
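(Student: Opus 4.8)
The plan is to obtain Theorem~\ref{thm:vr} as a direct specialisation of Corollary~1 of~\cite{song2022coordinate} to the saddle-point reformulation of~\eqref{eq:primal_gen}, so essentially all the work is in matching our objects to the template of that paper and checking its hypotheses. First I would write the Lagrangian minimax form
\[
    \textstyle\min_{X \geq 0}\ \max_{(\lambda,\mu)}\ \big\{\langle C, X\rangle + \tfrac{\gamma}{2}\|X\|_2^2 + \langle A[X] - B,\, (\lambda,\mu)\rangle\big\},
\]
and identify the ingredients: the coupling operator is the $A$ of~\eqref{eq:primal_gen}, with $\|A\|_{2,2}=\sqrt{n+m}$ by~\eqref{eq:A}; the primal regulariser $\tfrac{\gamma}{2}\|\cdot\|_2^2 + \iota_{\{\,\cdot\,\geq 0\}}$ is $\gamma$-strongly convex with respect to $\|\cdot\|_2$ and admits a closed-form Euclidean prox over the nonnegative orthant; the dual variable is unconstrained (the $-B^\top(\lambda,\mu)$ part being linear in $(\lambda,\mu)$) and partitions into exactly $b=2$ coordinate blocks $\lambda\in\mathbb{R}^n$, $\mu\in\mathbb{R}^m$, corresponding to the row-sum and column-sum constraints; and the assumption $\|(\lambda^*,\mu^*)\|_2\le R_2$ is the dual-domain radius hypothesis required by the corollary. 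The strongly convex $\tfrac{\gamma}{2}\|X\|_2^2$ term is precisely what places us in the accelerated $\mathcal{O}(1/k^2)$ regime of~\cite{song2022coordinate}, reflected in the growing weights $a_k$.

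Second, I would verify that Algorithm~\ref{alg:vr} is literally the CLVR iteration of~\cite{song2022coordinate} for this instance with $b=2$ and block probabilities $p_\lambda=p_\mu=\tfrac12$ (realised by the test $\xi_k<0.5$). Concretely: $X_{k+1}=[\alpha X_0 - q_{k-1}]_+/(\alpha+\gamma A_{k+1})$ is the minimiser of $\langle q_{k-1},X\rangle + \tfrac{\gamma A_{k+1}}{2}\|X\|_2^2 + \tfrac{\alpha}{2}\|X-X_0\|_2^2$ over $X\ge 0$, i.e. the prox step of the scheme with the strongly convex term folded into the denominator; the auxiliary sequence $z_k$ accumulates the sampled dual increments embedded back into $\mathbb{R}^{n\times m}$ through the all-ones vectors $\mathbf{1}_m,\mathbf{1}_n$ (which realises $A^\top$ applied blockwise, since $A^\top(\lambda,\mu)=\lambda\mathbf{1}_m^\top+\mathbf{1}_n\mu^\top$), and $q_k = q_{k-1}+a_{k+1}(z_k+C)+2a_k(z_k-z_{k-1})$ is the resulting variance-reduced running estimate of the primal gradient $A^\top y + C$, the factor $2=1/p$ supplying the correction term; finally the weights $A_0=a_0=1/(2\sqrt{n+m})$ and $a_{k+1}=\tfrac14\sqrt{(1+\gamma A_k/\alpha)/(n+m)}$, $A_{k+1}=A_k+a_k$ are the step-size recursion of the cited method for operator norm $\sqrt{n+m}$ and $b=2$. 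I would check that $\alpha$ lies in the admissible range and that these $a_k,A_k$ satisfy the recursive inequalities relating $a_{k+1}$, $A_k$, and $\|A\|_{2,2}$ imposed in the hypotheses of Corollary~1.

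With these identifications in place, Corollary~1 of~\cite{song2022coordinate} applies verbatim and gives $\mathbb{E}[f(\widetilde X_k)-f(X^*)]=\mathcal{O}(b^2\|A\|_{2,2}^2R_2^2/(\gamma k^2))$ and $\mathbb{E}[\|A[\widetilde X_k]-B\|_2]=\mathcal{O}(b^2\|A\|_{2,2}^2R_2/(\gamma k^2))$; since $b=2$ is an absolute constant it is absorbed into the $\mathcal{O}(\cdot)$, and writing $R$ for $R_2$ produces exactly the two displayed bounds. (Downstream, the same argument used in Section~\ref{sec:aagd} for Algorithm~\ref{alg:apdagd} — controlling primal suboptimality by the duality gap, bounding the feasibility residual via $\|A\|_{2,2}$, and using Lemma~\ref{lem:radius} together with $R_2\le R\sqrt{n+m}$ — converts these into the arithmetic-complexity estimate listed in Table~\ref{table:rates}, but that conversion is separate from the present statement.)

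The main obstacle I expect is the bookkeeping of the second step: confirming that the concrete running quantities $q_k$ and $z_k$ of Algorithm~\ref{alg:vr} reconstruct the variance-reduced gradient estimator evaluated at the correct extrapolated dual iterate used in~\cite{song2022coordinate}, and that the problem-specific step-size schedule meets their admissibility conditions for the strongly convex regime. Everything genuinely OT-specific here — the closed-form nonnegative-orthant prox $[\,\cdot\,]_+/(\alpha+\gamma A_{k+1})$, the $\mathbf{1}$-vector embeddings of the two dual blocks, and the value $\|A\|_{2,2}=\sqrt{n+m}$ — is elementary; the rest is an appeal to the cited convergence theorem, so no new technical machinery beyond a careful translation is needed.
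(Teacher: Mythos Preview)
Your proposal is correct and matches the paper's approach: the paper does not give an independent proof of Theorem~\ref{thm:vr} but simply quotes it as Corollary~1 of~\cite{song2022coordinate} specialised to $b=2$, so the entire content is exactly the identification-and-hypothesis-check you outline. If anything you are more explicit than the paper, which states the result without spelling out the translation of Algorithm~\ref{alg:vr} into the CLVR template; the downstream conversion you mention in parentheses is indeed the separate Theorem~\ref{thm:vr-main}.
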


Taking into account \eqref{eq:A} and Lemma~\ref{lem:radius}, using the same reasoning as for Theorem~\ref{thm:apd-main}, one has
\begin{theorem} \label{thm:vr-main}
Number of iterations of Algorithm~\ref{alg:vr}, sufficient for Algorithm~\ref{alg:approx_ot_apdagd} to return expected $\varepsilon$-optimal transport plan $X$ such that $X \mathbf{1}_m = a, X^\top \mathbf{1}_n = b$, is
\[
\textstyle\mathcal{O}\left(\min\left\{\frac{(n + m)^{3/2} \|C\|_\infty^2}{\varepsilon^2}, \frac{(n + m) \|C\|_\infty} {\varepsilon}\right\}\right),
\]
where ``expected $\varepsilon$-optimal'' means that $\mathbb{E}[\langle C, X\rangle] - \varepsilon \leq \langle C, X^*\rangle$.
\end{theorem}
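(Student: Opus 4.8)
The plan is to reuse, almost verbatim, the reduction that takes Theorem~\ref{thm:apd} to Theorem~\ref{thm:apd-main} in Section~\ref{sec:aagd}, with Theorem~\ref{thm:vr} in the place of Theorem~\ref{thm:apd} and with every accuracy statement replaced by its expectation. This is legitimate because the only randomness in Algorithm~\ref{alg:vr} lies in the uniform draws $\xi_0,\dots,\xi_{k-1}$, whereas the minimisers $X^*$ of \eqref{eq:original} and $X^*_{\text{reg.}}$ of \eqref{eq:primal}, as well as the dual optimum, are deterministic. Concretely, I would run Algorithm~\ref{alg:approx_ot_apdagd} with $\gamma = \varepsilon/3$ and inner target accuracy $\varepsilon/3$ using Algorithm~\ref{alg:vr}; its output $X$ is the projection of the averaged iterate $\widetilde{X}_k$ onto the feasible polytope via Algorithm~2 of \cite{altschuler2017near}, hence $X\mathbf{1}_m = a$ and $X^\top\mathbf{1}_n = b$ exactly.

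The core step is the three-term decomposition
\[
    \mathbb{E}[\langle C, X\rangle] - \langle C, X^*\rangle = \langle C, X^*_{\text{reg.}} - X^* \rangle + \mathbb{E}[\langle C, \widetilde{X}_k - X^*_{\text{reg.}} \rangle] + \mathbb{E}[\langle C, X - \widetilde{X}_k \rangle],
\]
bounded as in Section~\ref{sec:aagd}. The regularisation bias is deterministic: from $f(X^*_{\text{reg.}}) \le f(X^*)$ and $\|X^*\|_2^2 \le \|X^*\|_1^2 = 1$ it is at most $\tfrac{\gamma}{2}\|X^*\|_2^2 \le \tfrac{\gamma}{2}$. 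The optimisation error uses $\langle C, \widetilde{X}_k\rangle \le f(\widetilde{X}_k)$, weak duality $\varphi(\lambda_k,\mu_k) \le f(X^*_{\text{reg.}})$ and $\|X^*_{\text{reg.}}\|_2^2 \le 1$ to give $\mathbb{E}[\langle C, \widetilde{X}_k - X^*_{\text{reg.}}\rangle] \le \mathbb{E}[f(\widetilde{X}_k) - \varphi(\lambda_k,\mu_k)] + \tfrac{\gamma}{2}$. The rounding error uses Lemma~7 of \cite{altschuler2017near} applied conditionally on the realised iterate $\widetilde{X}_k$ and then averaged, namely $\mathbb{E}[\langle C, X - \widetilde{X}_k\rangle] \le 2\|C\|_\infty\,\mathbb{E}[\|\widetilde{X}_k\mathbf{1}_m - a\|_1 + \|\widetilde{X}_k^\top\mathbf{1}_n - b\|_1] \le 2\sqrt{n+m}\,\|C\|_\infty\,\mathbb{E}[\|A[\widetilde{X}_k] - B\|_2]$.

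Then I would substitute the rates of Theorem~\ref{thm:vr} together with $\|A\|_{2,2}^2 = n+m$ from \eqref{eq:A} and, as in Theorem~\ref{thm:apd-main}, $R \le \|C\|_\infty + \gamma/\min\{n,m\} = \mathcal{O}(\|C\|_\infty)$ from Lemma~\ref{lem:radius} and $R_2 \le R\sqrt{n+m}$, so that $\mathbb{E}[f(\widetilde{X}_k) - \varphi(\lambda_k,\mu_k)] = \mathcal{O}((n+m)R^2/(\gamma k^2))$ and $\mathbb{E}[\|A[\widetilde{X}_k]-B\|_2] = \mathcal{O}((n+m)R/(\gamma k^2))$. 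With $\gamma = \varepsilon/3$, demanding each of the three pieces to be $\le \varepsilon/3$ reduces to an inversion of the same type as in \eqref{eq:apd-intermediate}: the primal-dual-gap requirement is dominated, and the rounding requirement pins down $k$, the two entries of the minimum arising in the same way as in Theorem~\ref{thm:apd-main}. Summing the three bounds gives $\mathbb{E}[\langle C, X\rangle] \le \langle C, X^*\rangle + \varepsilon$, the claimed expected $\varepsilon$-optimality.

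I expect the main obstacle to be the careful handling of the randomness rather than any new estimate: Theorem~\ref{thm:vr}, unlike Theorems~\ref{thm:apd} and \ref{thm:am}, provides no bound on $\mathbb{E}[\|\widetilde{X}_k - X^*_{\text{reg.}}\|_2]$, so the transport cost of $X - \widetilde{X}_k$ must be routed entirely through the constraint residual $\|A[\widetilde{X}_k] - B\|_2$ and weak duality; moreover Lemma~7 of \cite{altschuler2017near} is a deterministic statement about a fixed plan, so it must be invoked inside the expectation (conditionally on $\widetilde{X}_k$, before averaging), and one should check that $\widetilde{X}_k$, $\lambda_k$, $\mu_k$ in the decomposition, in the stopping test of Algorithm~\ref{alg:vr}, and in Theorem~\ref{thm:vr} all refer to the same iterate index.
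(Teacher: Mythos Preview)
Your proposal is correct and follows exactly the paper's own approach: the paper's proof of Theorem~\ref{thm:vr-main} is literally ``taking into account \eqref{eq:A} and Lemma~\ref{lem:radius}, using the same reasoning as for Theorem~\ref{thm:apd-main}'', and you have fleshed that out, including the passage to expectations and the conditional use of Lemma~7 of \cite{altschuler2017near}, which the paper leaves implicit. One small clarification: since, as you yourself observe, Theorem~\ref{thm:vr} supplies no bound on $\mathbb{E}[\|\widetilde{X}_k - X^*_{\text{reg.}}\|_2]$, only the $\leq_1$ route of Section~\ref{sec:aagd} (through the constraint residual) is available, so strictly speaking you obtain only the second entry $(n+m)\|C\|_\infty/\varepsilon$ of the minimum directly---but that is the smaller of the two, so the stated $\mathcal{O}(\min\{\cdot,\cdot\})$ bound follows a fortiori and your phrase ``the two entries of the minimum arising in the same way'' should simply be dropped.
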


One can see that asymptotic of iteration complexity is the same as that of Algorithms~\ref{alg:apdagd} and \ref{alg:aam}. This allows to use the same pre- and post-processing Algorithm~\ref{alg:approx_ot_apdagd} to apply this algorithm to the OT problem. The advantage of this algorithm is the simplicity of iterations. It is expect that despite the same $\mathcal{O}(nm)$ arithmetic complexity of one iteration, constant of it in practice is significantly smaller than for accelerated methods considered before. 

\section{Numerical experiments} \label{sec:experiments}

All the optimisation algorithms described in previous section are implemented in Python 3 programming language. Reproduction package including source code of algorithms and experiments settings is hosted on GitHub\footnote{Repository is available at \url{https://github.com/MuXauJl11110/Euclidean-Regularised-Optimal-Transport}}. We consider OT problem for the pair of images from MNIST dataset \cite{deng2012mnist}, where distributions are represented by vectorised pixel intensities and cost matrix contains pairwise Euclidean distances between pixels. 
\begin{figure}[ht!]
     \centering
     \begin{subfigure}[b]{0.3\textwidth}
         \centering
         \includegraphics[width=\textwidth]{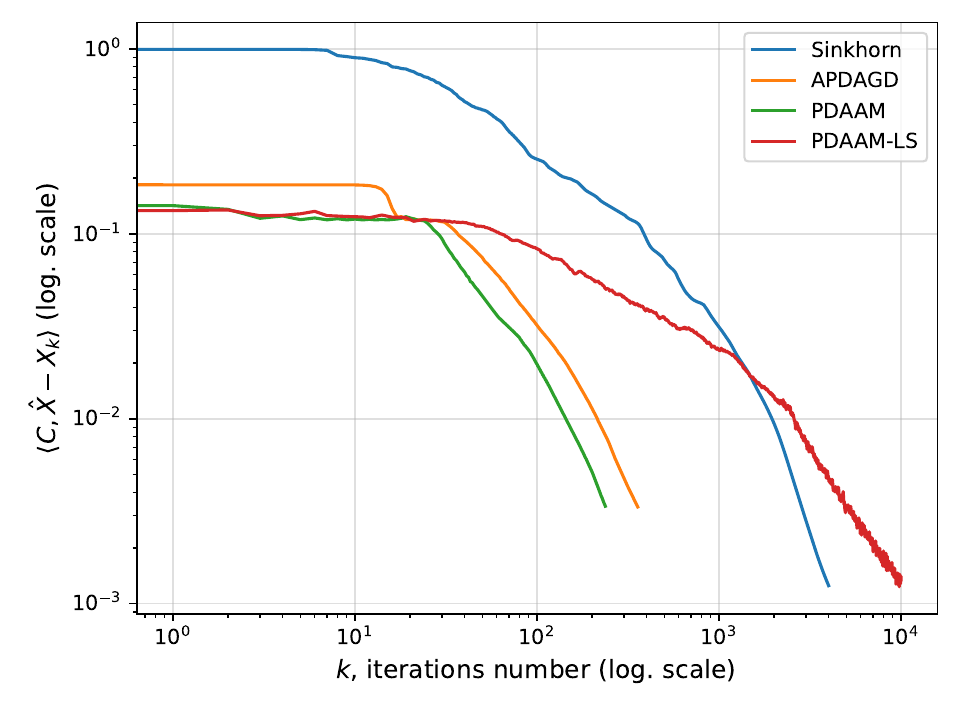}
         \caption{Function value convergence curves, $\varepsilon \propto 10^{-2}$.}
        \label{fig:fun_a}
     \end{subfigure}
     \begin{subfigure}[b]{0.3\textwidth}
         \centering
         \includegraphics[width=\textwidth]{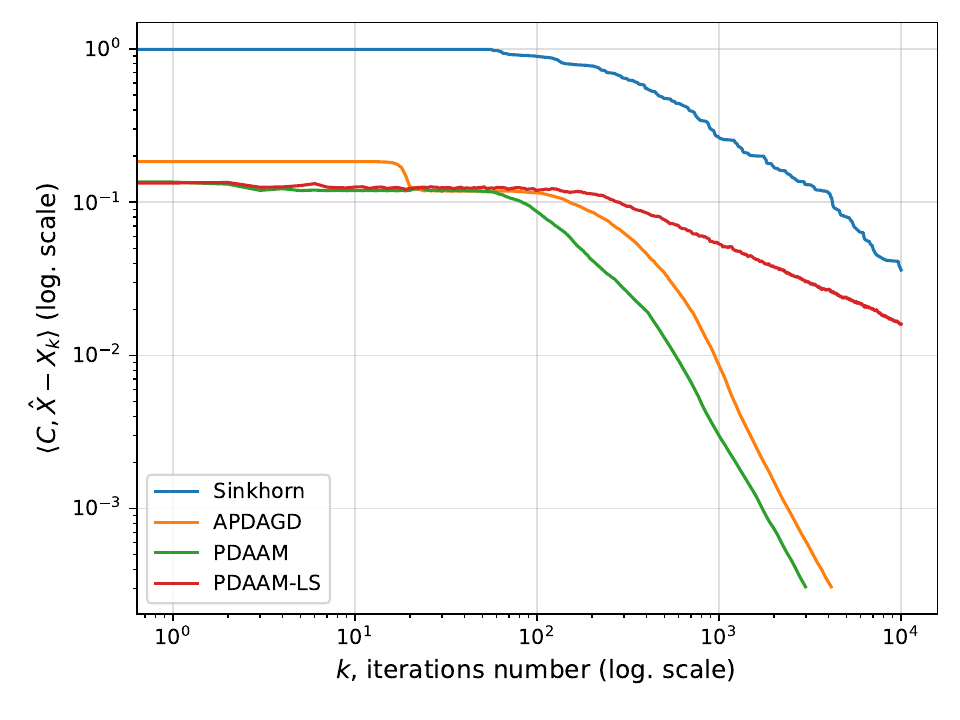}
         \caption{Function value convergence curves, $\varepsilon \propto 10^{-3}$.}
        \label{fig:fun_b}
     \end{subfigure}
     \begin{subfigure}[b]{0.3\textwidth}
         \centering
         \includegraphics[width=\textwidth]{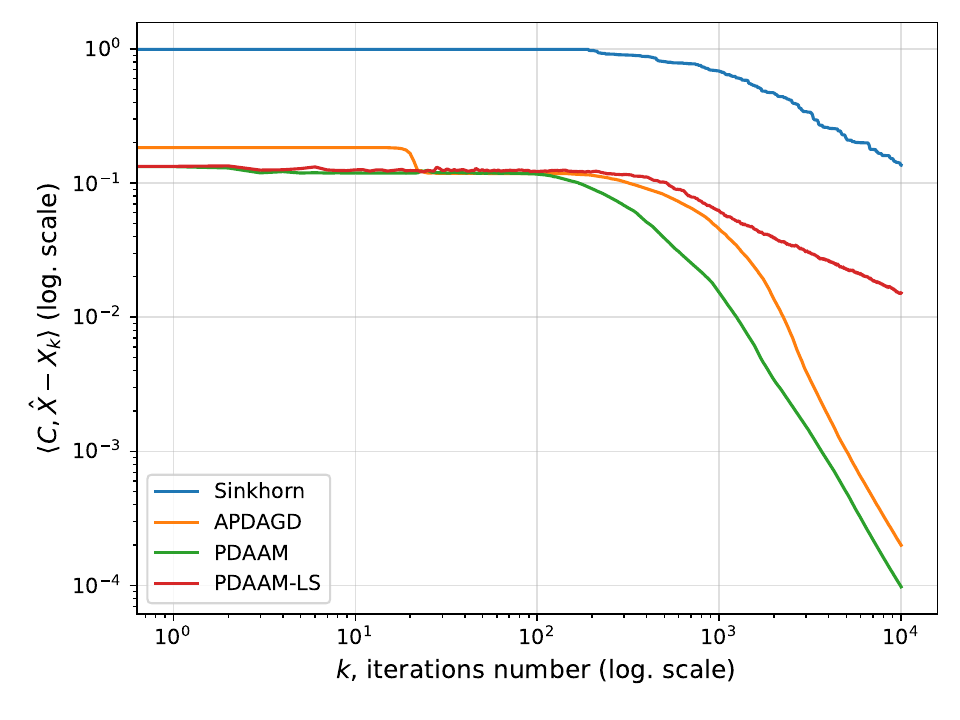}
         \caption{Function value convergence curves, $\varepsilon \propto 10^{-4}$.}
        \label{fig:fun_c}
     \end{subfigure}
     \begin{subfigure}[b]{0.3\textwidth}
         \centering
         \includegraphics[width=\textwidth]{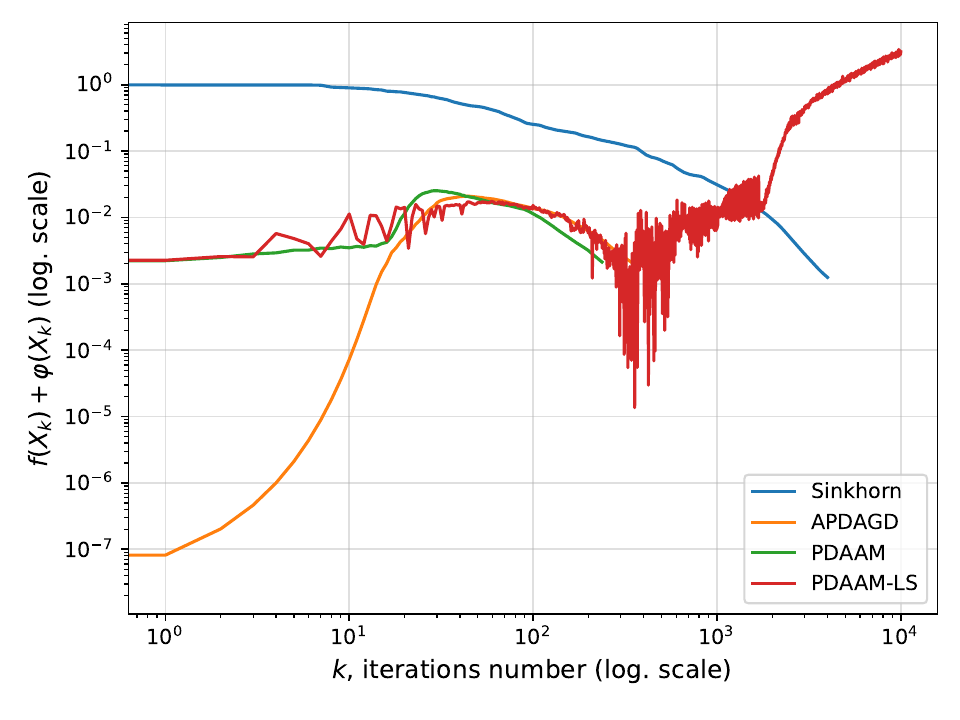}
         \caption{Dual gap convergence curves, $\varepsilon \propto 10^{-2}$.}
        \label{fig:gap_a}
     \end{subfigure}
     \begin{subfigure}[b]{0.3\textwidth}
         \centering
         \includegraphics[width=\textwidth]{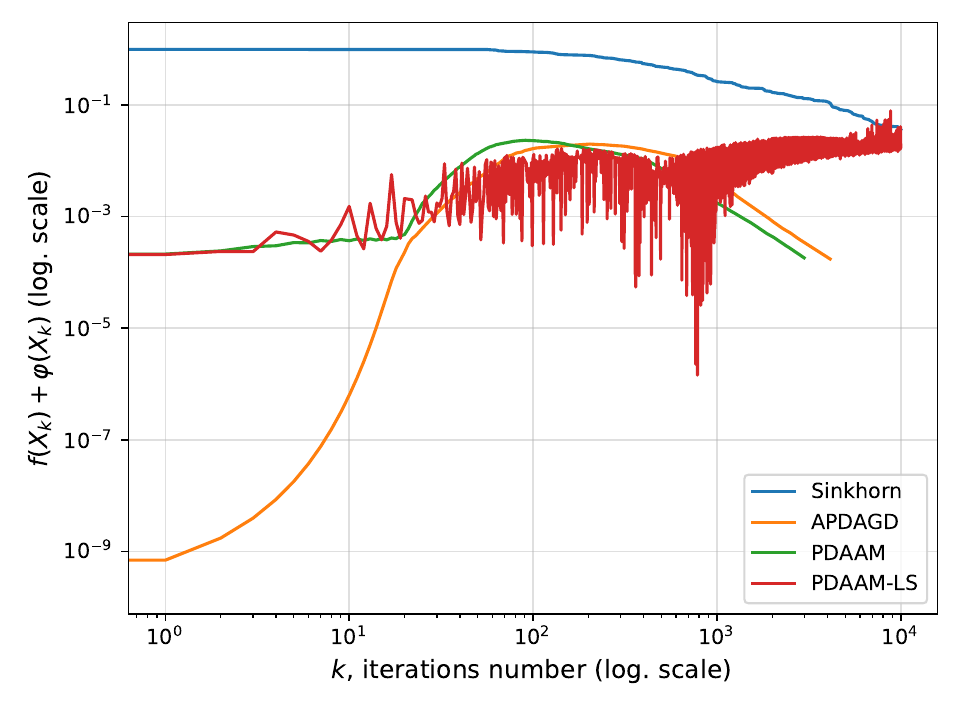}
         \caption{Dual gap convergence curves, $\varepsilon \propto 10^{-3}$.}
        \label{fig:gap_b}
     \end{subfigure}
     \begin{subfigure}[b]{0.3\textwidth}
         \centering
         \includegraphics[width=\textwidth]{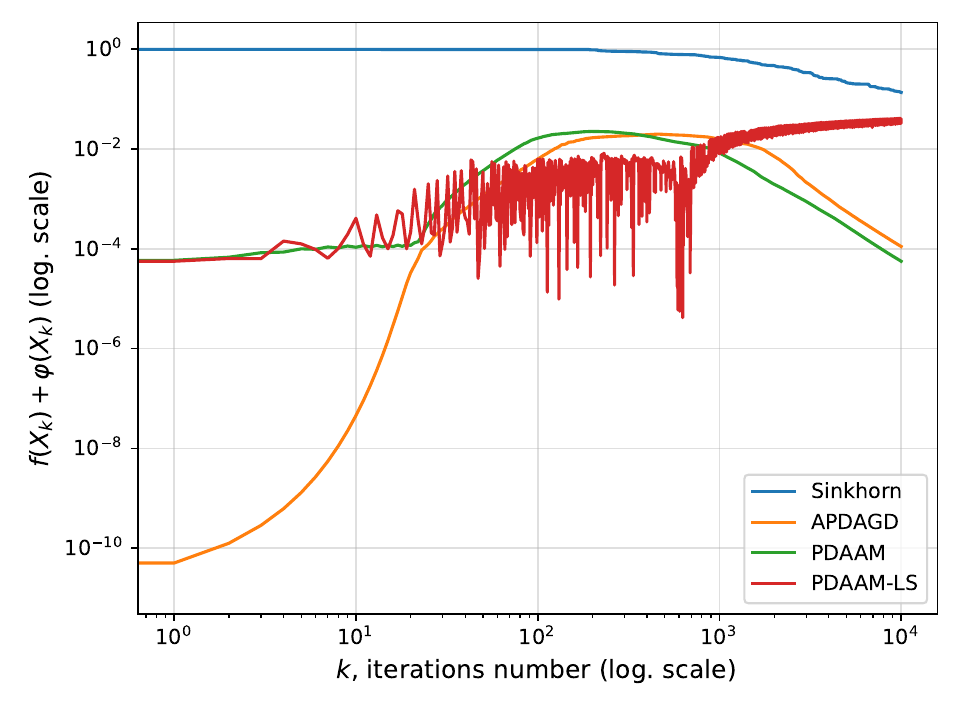}
         \caption{Dual gap convergence curves, $\varepsilon \propto 10^{-4}$.}
        \label{fig:gap_c}
     \end{subfigure}
        \caption{Practical efficiency of Sinkhorn--Knopp, Adaptive Accelerated Gradient, and Accelerated Alternating methods applied to entropy-regularised OT problem on MNIST dataset.}
        \label{fig:entropy}
\end{figure}

Firstly, experiment on comparison of algorithms applied to entropy-regularised OT was carried out. Following algorithms were compared: Sinkhorn--Knopp algorithm (Sinkhorn) \cite{dvurechensky2018computational}, Adaptive Primal-dual Accelerated Gradient Descent (APDAGD) \cite{dvurechensky2018computational}, Primal-dual Accelerated Alternating Minimisation (PDAAM) \cite{guminov2021combination} and its modification which uses one-dimensional optimisation to choose step size (PDAAM-LS). Results of the experiment are shown in Figure~\ref{fig:entropy}. There are presented convergence curves of methods for two progress measures: function value for original problem \eqref{eq:original} and dual gap for problem \eqref{eq:dual}. The range of target accuracy value is $\varepsilon \in \{2 \cdot 10^{-2}, 1.85 \cdot 10^{-3}, 5 \cdot 10^{-4}\}$ (each target accuracy value requires separate experiment, because $\varepsilon$ is a parameters of Algorithms~\ref{alg:approx_ot_sinkhorn} and \ref{alg:approx_ot_apdagd} and affects the convergence from the beginning).

All the plots show that PDAAM is leading algorithm, and performance of APDAGD is competitive with it. On the other hand, Sinkhorn--Knopp algorithm converges slowly, especially for small $\varepsilon$. PDAAM-LS demonstrates unstable behaviour in our experiment. 
\begin{figure}[ht!]
     \centering
     \begin{subfigure}[b]{0.3\textwidth}
         \centering
         \includegraphics[width=\textwidth]{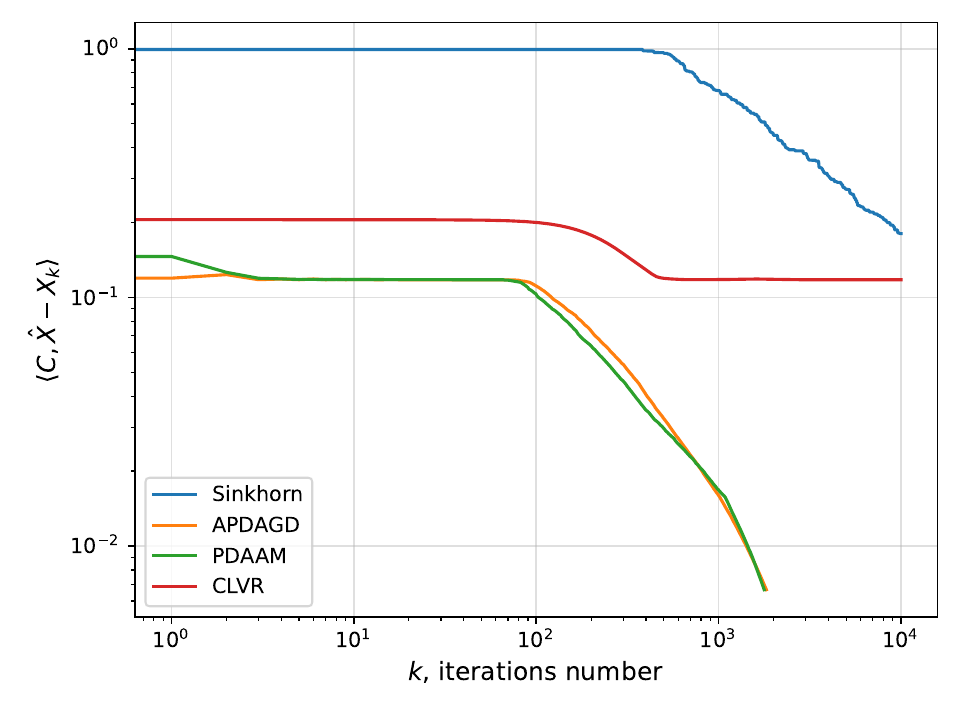}
         \caption{Function value convergence curves, $\varepsilon \propto 10^{-2}$.}
        \label{fig:new_fun_a}
     \end{subfigure}
     \begin{subfigure}[b]{0.3\textwidth}
         \centering
         \includegraphics[width=\textwidth]{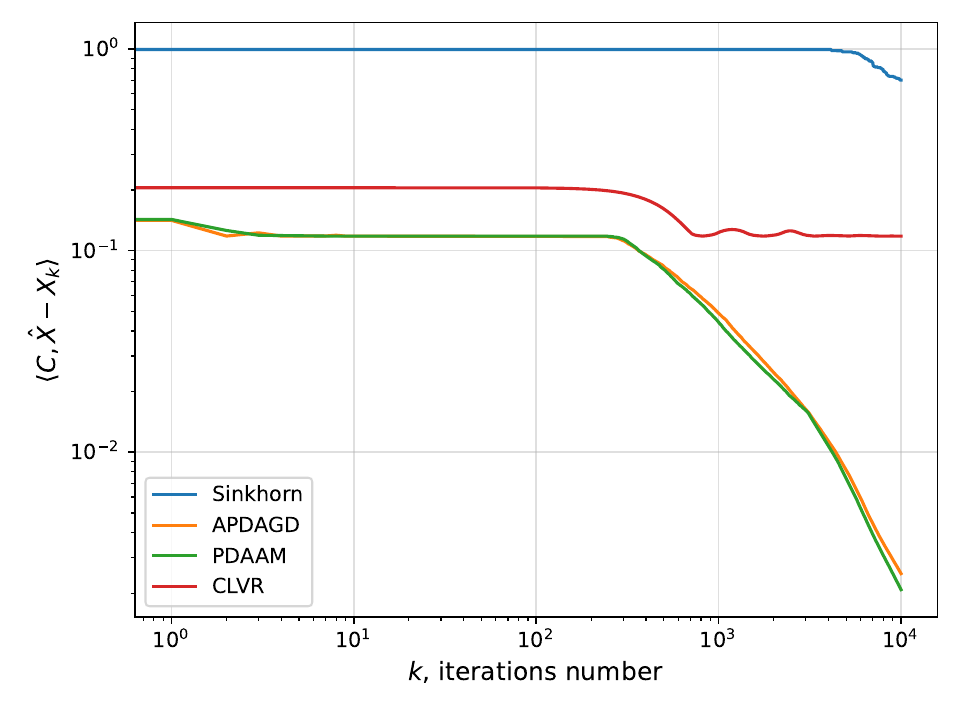}
         \caption{Function value convergence curves, $\varepsilon \propto 10^{-3}$.}
        \label{fig:new_fun_b}
     \end{subfigure}
     \begin{subfigure}[b]{0.3\textwidth}
         \centering
         \includegraphics[width=\textwidth]{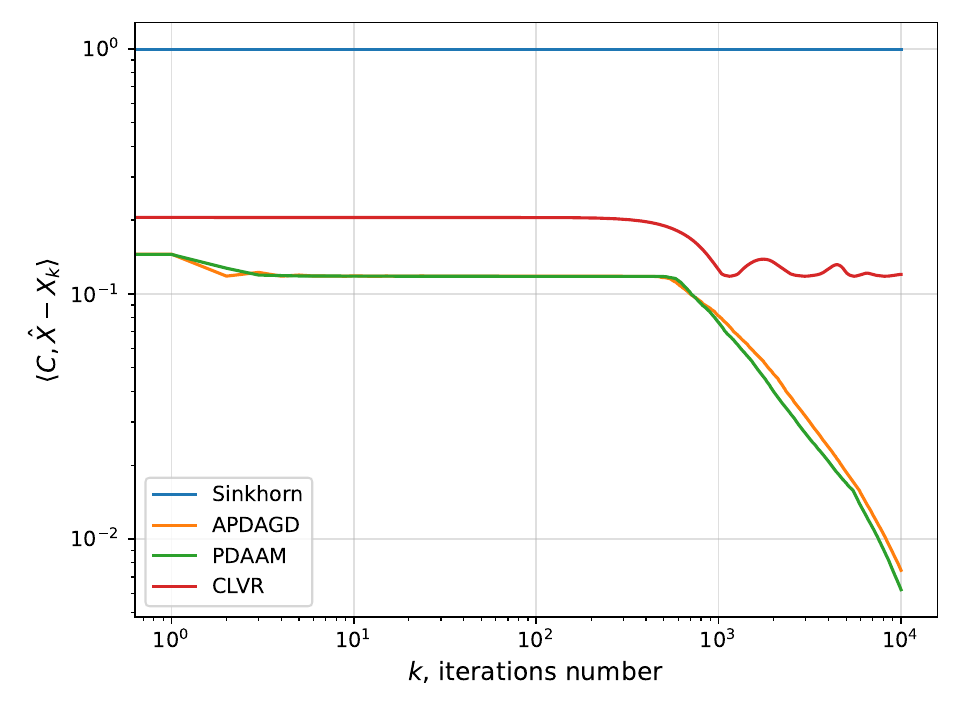}
         \caption{Function value convergence curves, $\varepsilon \propto 10^{-4}$.}
        \label{fig:new_fun_c}
     \end{subfigure}
     \begin{subfigure}[b]{0.3\textwidth}
         \centering
         \includegraphics[width=\textwidth]{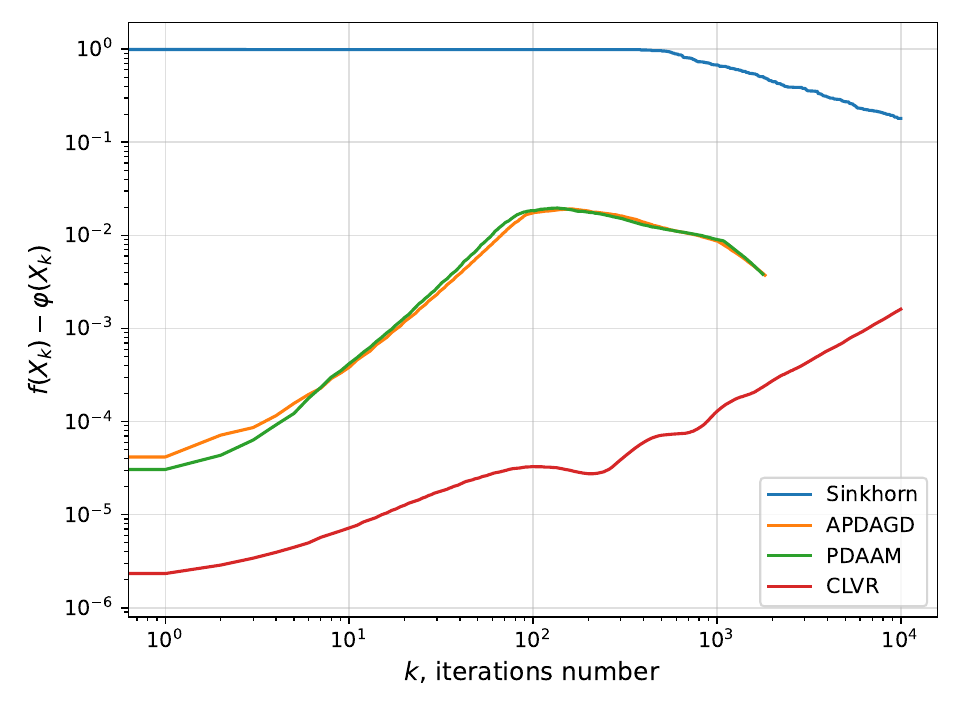}
         \caption{Dual gap convergence curves, $\varepsilon \propto 10^{-2}$.}
        \label{fig:new_gap_a}
     \end{subfigure}
     \begin{subfigure}[b]{0.3\textwidth}
         \centering
         \includegraphics[width=\textwidth]{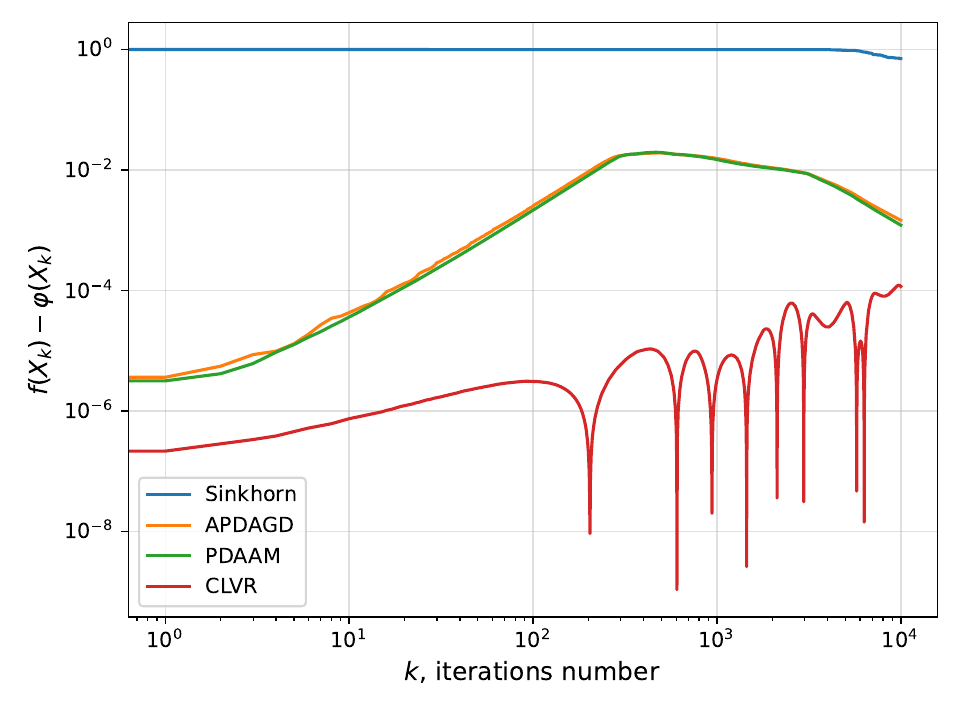}
         \caption{Dual gap convergence curves, $\varepsilon \propto 10^{-3}$.}
        \label{fig:new_gap_b}
     \end{subfigure}
     \begin{subfigure}[b]{0.3\textwidth}
         \centering
         \includegraphics[width=\textwidth]{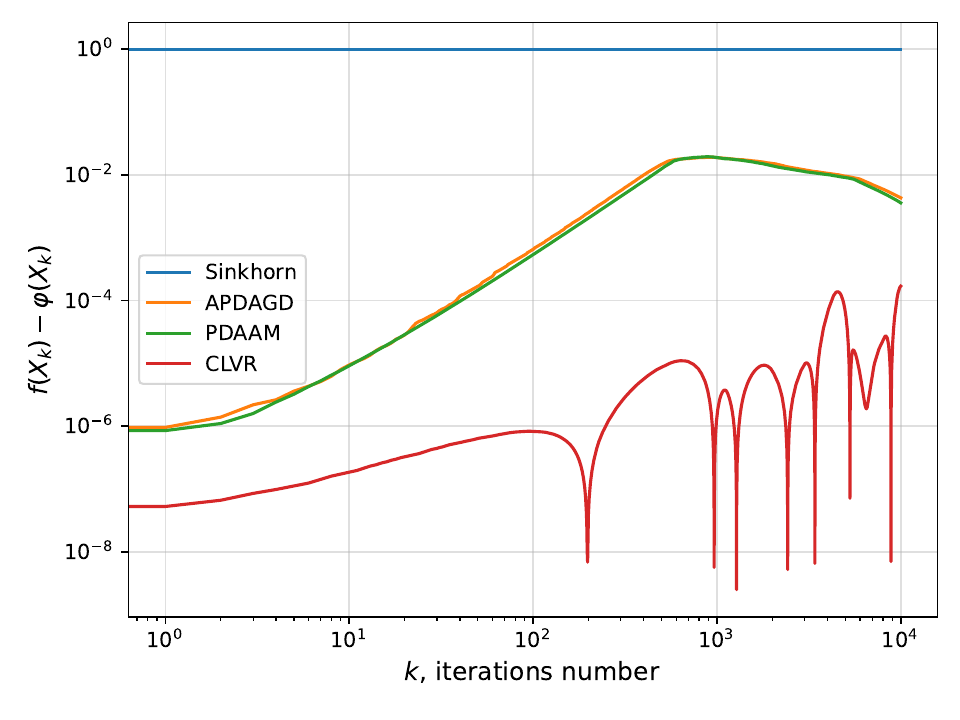}
         \caption{Dual gap convergence curves, $\varepsilon \propto 10^{-4}$.}
        \label{fig:new_gap_c}
     \end{subfigure}
        \caption{Practical efficiency of Sinkhorn--Knopp, Adaptive Accelerated Gradient, and Accelerated Alternating methods applied to Euclidean-regularised OT problem on MNIST dataset.}
        \label{fig:Euclidean}
\end{figure}

Secondly, the same algorithms were compared while applied to Euclidean-regularised OT problem. Figure~\ref{fig:Euclidean} shows convergence curves of methods, organisation of the plots is the same as above. One can see that ordering of the methods' performance remain the same as in the case of entropy-regularised OT. Specifically, the PDAAM algorithm convergence is faster than that of APDAGD and Sinkhorn. On the other hand, difference between PDAAM and APDAGD performance is less significant in the case of Euclidean-regularised OT (we conclude that progress of step which is optimal with respect to one of the dual variables is not much bigger than progress of the gradient step), and Sinkhorn algorithm performs significantly worse than in entropy-regularised OT and is not efficient in practice. CLVR did not displayed itself an efficient method in our experiment. Generally, convergence of all of the algorithms in the case of Euclidean regularisation is more prone to slowing down on the latter iterations.  

The expected property of Euclidean-regularised OT that the optimal transport plan obtained with it is sparse is approved in our experiments. One can see the examples of transport plans in Figure~\ref{fig:sparse}, the fraction of zero elements (which are $< 10^{-21}$) in them is around 99.5\%.

\section{Discussion} \label{sec:conclusion}
Euclidean regularisation for OT problems has been recently explored in several papers due to its practically valuable properties, such as robustness to small regularisation parameter and sparsity of the optimal transport plan. This paper provides a theoretical analysis of various algorithms that are applicable efficiently to Euclidean-regularised OT. We demonstrate and compare their practical performance. Our findings reveal that these desirable properties come at a cost. Namely, the slower convergence of all the algorithms and faster increase in arithmetic complexity as dimensionality grows.

Our plans involve considering different convex optimisation algorithms applied to Euclidean-regularised OT, focusing on splitting algorithms that are to be more computationally stable with small regularisation parameter \cite{lindback2023bringing}. Additionally, we aim to explore the application of Euclidean regularisation for the Wasserstein barycenter problem.
\begin{figure}[ht!]
     \centering
     \includegraphics[width=0.7\textwidth]{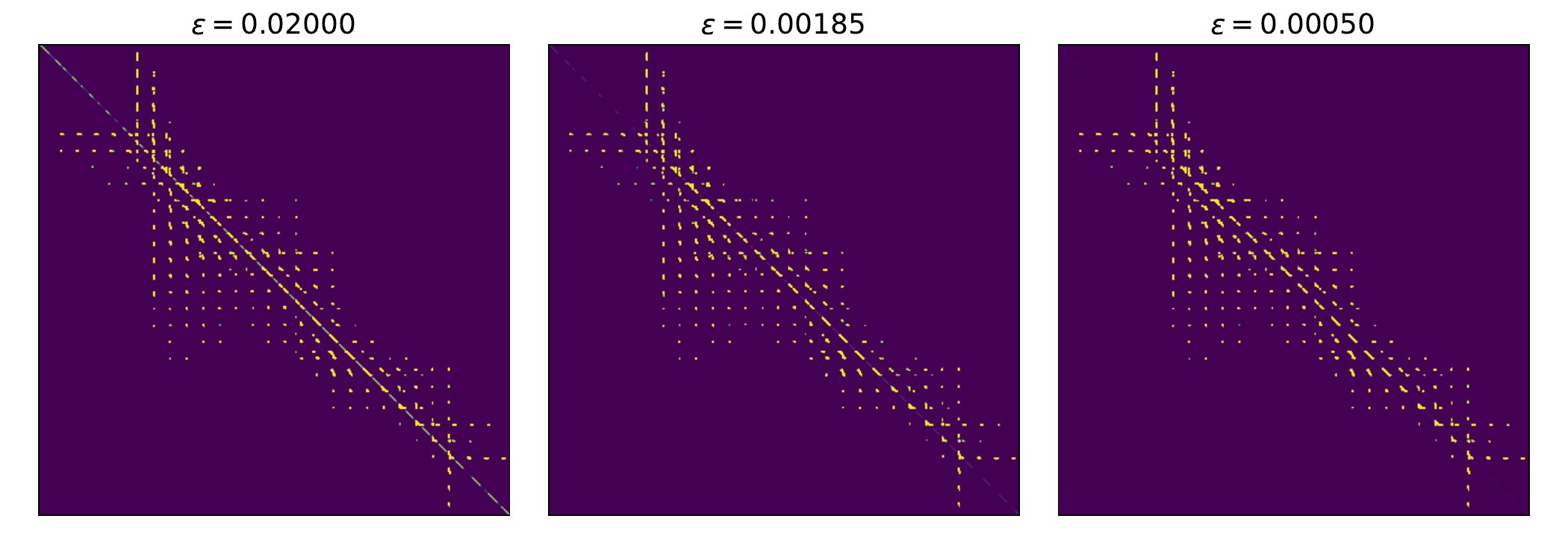}
     \caption{Sparse optimal transport plans obtained by Adaptive Accelerated Gradient Descent applied to Euclidean-regularised OT problem on MNIST dataset, 99.5\% of zero elements.}
        \label{fig:sparse}
\end{figure}

\bibliographystyle{plain}
\bibliography{main}

\end{document}